\documentclass[reqno]{amsart}

\usepackage{amssymb,amsmath,amsfonts,amsthm}
\usepackage[all]{xy}
\usepackage{enumerate}
\usepackage{mathrsfs}
\usepackage{graphicx}

\usepackage{listings}



\theoremstyle{plain}
\newtheorem{thm}{Theorem}
\newtheorem{cor}[thm]{Corollary}
\newtheorem{lem}[thm]{Lemma}

\newtheorem{prop}[thm]{Proposition}

\theoremstyle{definition}
\newtheorem{defn}[thm]{Definition}

\theoremstyle{remark}
\newtheorem{rem}[thm]{Remark}
\newtheorem{exmp}[thm]{Example}



\hyphenation{pa-ra-me-tri-zed pa-ra-me-tri-za-tion pa-ra-me-tri-za-tions ho-mo-ge-ne-ous sy-zy-gy sy-zy-gies bi-ho-mo-ge-ne-ous pa-ra-me-tri-zes}

\newcommand\dto{\dashrightarrow}

\newcommand\lto{\longrightarrow}

\def\leq{\leqslant}
\def\geq{\geqslant}

\newcommand\ZZ{\mathbb Z}

\newcommand\PP{\mathbb P}
\newcommand\Sc{\mathcal S}
\newcommand\Rc{\mathcal R}
\newcommand\Hc{\mathcal H}
\newcommand\Zc{\mathcal Z}
\newcommand\Cc{\mathcal C}
\newcommand\Oc{\mathcal O}
\newcommand\Fitt{\mathfrak F}
\newcommand\Scc{\mathscr S}

\newcommand\mm{\mathfrak m}

\def\ud{{\underline{d}}}
\def\unu{{\underline{\nu}}}
\def\Rf{{\mathfrak{R}}}
\def\ue{{\underline{e}}}

\DeclareMathOperator\Rees{Rees}

\DeclareMathOperator\Sym{Sym}

\DeclareMathOperator\ann{ann}

\DeclareMathOperator\Spec{Spec}

\DeclareMathOperator\Proj{Proj}

\DeclareMathOperator\reg{reg}
\DeclareMathOperator\indeg{indeg}

\def\pp{{\mathfrak{p}}}

\def\CC{{\mathbb{C}}}
\def\corank{{\mathrm{corank}}}

%

\title[Fitting ideals and multiple-points of surface parameterizations]{Fitting ideals and multiple-points of surface parameterizations}

\author{Nicol\'as Botbol}
\thanks{Nicol\'as Botbol was partially supported by Marie-Curie Network ``SAGA" FP7 contract
PITN-GA-2008-214584, EU}
\address{Departamento de Matem\'atica, 
FCEN, Universidad de Buenos Aires, Argentina.}
\email{nbotbol@dm.uba.ar}

\author{Laurent Bus\'e}
\address{INRIA Sophia Antipolis - M\'editerran\'ee, Galaad team, 2004 route des Lucioles, B.P.~93, F-06902, Sophia Antipolis France.}
\email{Laurent.Buse@inria.fr}

\author{Marc Chardin}
\address{Institut Math\'ematique de Jussieu et 
Universit\'e Pierre et Marie Curie, Bo\^ite 247,
4 place Jussieu, F-75252 Paris CEDEX 05, France.}
\email{chardin@math.jussieu.fr}

\date{\today}

\begin{document}

 \begin{abstract}
 Given a birational parameterization $\phi$ of an algebraic surface $\Scc$ in the projective space $\PP^3$, the purpose of this paper is to investigate the sets of points on $\Scc$ whose preimage consists in $k$ or more points, counting multiplicities. They are described explicitly in terms of Fitting ideals of some graded parts of the symmetric algebra associated to the parameterization $\phi$.
 \end{abstract}

\maketitle

\section{Introduction}

Parameterized algebraic surfaces are ubiquitous in geometric modeling because they are used to describe the boundary of 3-dimensional shapes. To manipulate them, it is very useful to have an implicit representation in addition to their given parametric representation. Indeed, a parametric representation is for instance well adapted for drawing or sampling whereas an implicit representation allows significant improvements in intersection problems that are fundamental operations appearing in geometric processing for visualization, analysis and manufacturing of geometric models. Thus, there exists a rich literature on the change of representation from a parametric to an implicit representation under the classical form of a polynomial implicit equation. Although this problem can always be solved in principle, for instance via Gr\"obner basis computations, its practical implementation is not enough efficient to be useful in practical applications in geometric modeling for general parameterized surfaces.   

In order to overcome this difficulty, alternative implicit representations of parameterized surfaces under the form of a matrix have been considered. The first family of such representations comes from the resultant theory that produces a non-singular matrix whose determinant yields an implicit equation from a given surface parameterization. But the main advantage is also the main drawback of these resultant matrices: since they are universal with respect to the coefficients of the given surface parameterization, they are very easy to build in practice, but they are also very sensitive to the presence of base points. As a consequence, a particular resultant matrix has to be designed for each given particular class of parameterized surfaces. 
A second family of implicit matrix representations is based on the syzygies of the coordinates of a surface parameterization. Initiated by the geometric modeling community \cite{SC95}, these matrices have been deeply explored in a series of papers (see \cite{BuJo03,BC05,BDD08,Bot10} and the references therein). Compared to the resultant matrices, they are still very easy to build, although not universal, but their sensitivity to the presence of base points is much weaker. However, these matrices are in general singular matrices and the recovering of the classical implicit equation from them is more involved. Therefore, to be useful these matrices have to be seen as implicit representations on their own, without relying on the more classical implicit equation. In this spirit, the use of these singular matrix representations has recently been explored for addressing the curve/surface and surface/surface intersection problems  \cite{BL12,LBBM09}. 

As a continuation, the purpose of this paper is to investigate the self-intersection locus of a surface parameterization through its matrix representations. 
More precisely,  let $\phi$ be a parameterization from $\PP^2$ to $\PP^3$ of a rational algebraic surface $\Sc$ and let $M(\phi)$ be one of its matrix representations. The main result of this paper is that the drop of rank of $M(\phi)$ at a given point $P\in\PP^3$ is in relation with the fiber of the graph of $\phi$ over $P$. Thus, the Fitting ideals attached to $M(\phi)$ provide a filtration of the surface which is in correspondence with the degree and the dimension of the fibers of the graph of the parameterization $\phi$. 

It turns out that this kind of results have already been investigated for different purposes in the field of intersection theory under the name of multiple-point formulas. Given a finite map of schemes $\varphi:X\rightarrow Y$ of codimension one, its source and target double-point cycles have been extensively studied (see e.g.~\cite{KLU96,KLU92,Pie78,Tei77}). Moreover, in \cite{MP89} Fitting ideals are used to give a scheme structure to the multiple-point loci. Therefore, in the particular case where $\phi:\PP^2\rightarrow \PP^3$ has no base point (hence is a finite map), our results are partially contained in the above literature. Thus, the main contribution of this paper is to extend these previous works to the case where $\phi$ is not necessarily a finite map under ubiquitous conditions. The theory in this article is thus applicable to most of the parameterizations that appear in applications.

\medskip

In what follows, we will first briefly overview matrix representations and define some Fitting ideals attached to them in Section \ref{sec:def}. The main result of this paper is then proved in Section \ref{sec:main}. Section \ref{sec:comp} is devoted to the computational aspects of our results. In Section \ref{sec:link} we will discuss on the link with multiple-points formulas developed in the field of intersection theory. Finally, we will treat the case of parameterizations $\phi$ whose source is $\PP^1\times \PP^1$ instead of $\PP^2$. This type of parameterizations being widely used in geometric modeling (see for instance \cite{SSV12} for a detailed study of a special case).

\section{Fitting ideals associated to surface parameterizations}\label{sec:def}

Let $k$ be a field and $\phi: \PP^2_k \dto \PP^3_k$ be a rational map given by four homogeneous polynomials $f_0,f_1,f_2,f_3$ of degree $d$. We will denote by $\Scc$ the closure of its image. Set $s:=(s_0,s_1,s_2)$, $x:=(x_0,x_1,x_2,x_3)$, and let $S:=k[s]$ and $R:=k[x]$ be the polynomial rings defining $\PP^2_k$ and $\PP^3_k$ respectively. Finally, define the ideal $I=(f_0,f_1,f_2,f_3)\subset S_d$. In the sequel, we will assume that:
\begin{equation}\label{H}\tag{$\Hc$}
\left.
\begin{array}{l}
- \  \Scc \textrm{ is a surface in } \PP^3_k, \\
- \   \phi \textrm{ is a birational map onto } \Sc, \\
- \   V(I) \textrm{ is finite and locally a complete intersection.}
\end{array}\right\}
\end{equation}

\medskip

\subsection{Matrix representations of $\phi$}
The approximation complex of cycles associated to the sequence of polynomials $f_0,f_1,f_2,f_3$ is of the form (see e.g.~\cite[\S4]{BuJo03} for an introduction to these complexes in this context)
\begin{equation}\label{eqZComplex}
 \Zc_\bullet: \qquad 0\to \Zc_3 \to \Zc_2 \to \Zc_1 \stackrel{M(\phi)}{\lto} \Zc_0 \to 0.
\end{equation}
The map $M(\phi)$ sends a 4-tuple $(g_0,g_1,g_2,g_3)$ to the form $\sum_{i=0}^{3}g_ix_i$, with the condition that $(g_0,g_1,g_2,g_3) \in \Zc_1$ if and only if $\sum_{i=0}^{3}g_if_i=0$. The complex $\Zc_\bullet$ inherits a grading from the canonical grading of the polynomial ring $S$. Thus, the notation $M(\phi)_\nu$ stands for the matrix in $(\Zc_\bullet)_\nu$ that corresponds to the degree $\nu$ part of $M(\phi)$.

Under our hypothesis,
 the symmetric algebra $\Sc_I:=\Sym_S(I)$ of the ideal $I\subset S$  is projectively isomorphic to the Rees algebra $\Rc_I:=\Rees_S(I)$. In other words, the graph $\Gamma \subset \PP^2_k\times \PP^3_k$ of the parameterization $\phi$ is scheme defined by $\Sc_I$. As a consequence, the approximation complex of cycles $\Zc_\bullet$ can be used to determine an implicit representation of the closed image of $\phi$, i.e.~the surface $\Scc$ (recall that $H_0(\Zc_\bullet)=\Sym_S(I)$). 

\begin{defn} Under the assumptions \eqref{H}, for all integer $\nu\geq \nu_0$ the matrix $M(\phi)_\nu$ of the graded map of free $R$-modules $(\Zc_1)_\nu\rightarrow (\Zc_0)_\nu$ is called a \emph{matrix representation} of $\phi$.	
\end{defn}

For the moment, the integer $\nu_0$ can be taken equal to $2d-2$, but we will come back to its definition in the next section. Here are the main features of the collection of matrix representations $M(\phi)_\nu$ of $\phi$, for $\nu\geq \nu_0$:
\begin{itemize}
	\item Its entries are linear forms in $R=k[x_0,\ldots,x_3]$,
	\item it has $\binom{\nu+2}{2}$ rows and at least as many columns as rows,
	\item it has maximal rank $\binom{\nu+2}{2}$ at the generic point of $\PP^3_k$, that is to say 
	$$\mathrm{rank} \, M(\phi)_\nu \otimes_{R}\mathrm{Frac}(R) = \binom{\nu+2}{2},$$
	\item when specializing $M(\phi)_\nu$ at a point $P\in \PP^3$, its rank drops if and only if $P\in \Sc$.
\end{itemize}

\subsection{Fitting ideals associated to $\phi$}
From the properties of matrix representations of $\phi$, for all $\nu\geq \nu_0$ the support of the Fitting ideal $\Fitt^0((\Sc_I)_\nu)$ is  $\Scc$ and hence provides a scheme structure for the closure of the image of $\phi$. Following \cite{Tei77} (and \cite[V.1.3]{EH00}), we call it the \emph{Fitting image} of $\phi$.

\begin{rem}
	Observe that by definition, the ideals $\Fitt^0((\Sc_I)_\nu)$ depend on the integer $\nu$ (it is generated in degree $\nu$) whereas $\ann_R((\Sc_I)_\nu)=\ker(\phi^\sharp)$ for all $\nu\geq \nu_0$ (see \cite[Proposition 5.1]{BuJo03} and \cite[Theorem 4.1]{BC05}), where $\phi^\sharp:R\to S$ is the map of rings induced by $\phi$ (it sends each $x_i$ to $f_i(s)$ for all $i=0,\ldots,3$) and $\ker(\phi^\sharp)$ is the usual ideal definition of the image of $\phi$.	
\end{rem}

In this paper, we will push further the study of $\Fitt^0((\Sc_I)_\nu)$ by looking at the other Fitting ideals $\Fitt^i((\Sc_I)_\nu)$, $i>0$, since they provide a natural stratification~:
$$ \Fitt^0((\Sc_I)_\nu) \subset \Fitt^1((\Sc_I)_\nu) \subset \Fitt^2((\Sc_I)_\nu) \subset \cdots \subset  \Fitt^{\binom{\nu+2}{2}}((\Sc_I)_\nu)=R.$$
As we will see, these Fitting ideals are closely related to the geometric properties of the parameterization $\phi$.  For simplicity, the Fitting ideals $\Fitt^i((\Sc_I)_\nu)$ will be denoted $\Fitt^i_\nu(\phi)$. We recall that  $\Fitt^i_\nu(\phi) \subset R$ is generated by all the minors of size $\binom{\nu+2}{2}-i$ of any matrix representation $M(\phi)_\nu$.

\begin{exmp} Consider the following parameterization of the sphere 
\begin{eqnarray*}
	\phi : \PP^2_\CC & \dto & \PP^3_\CC \\
	 (s_0:s_1:s_2) & \mapsto & (s_0^2+s_1^2+s_2^2: 2s_0s_2:2s_0s_1:s_0^2-s_1^2-s_2^2).
\end{eqnarray*}	
Its matrix representations $M(\phi)_\nu$ have the expected properties for all $\nu\geq 1$ (see the next section). The computation of the smallest such matrix yields 
$$M(\phi)_1=
\left(\begin{array}{cccc}
 0 &   X_1 &      X_2 &     -X_0+X_3 \\
	X_1 & 0   &     -X_0-X_3 & X_2 \\
	-X_2 &  -X_0-X_3 & 0       & X_1  
\end{array}\right).$$
Here is a Macaulay2 \cite{M2} code computing this matrix (and others by tuning its inputs: the parameterization $\phi$ and the integer $\nu$):
{\small 
\begin{verbatim}
	>A=QQ[s0,s1,s2];
	>f0=s0^2+s1^2+s2^2; f1=2*s0*s2; f2=2*s0*s1; f3=s0^2-s1^2-s2^2; 
	>F=matrix{{f0,f1,f2,f3}};
	>Z1=kernel koszul(1,F);
	>R=A[X0,X1,X2,X3];
	>d=(degree f0)_0; nu=2*(d-1)-1;
	>Z1nu=super basis(nu+d,Z1);
	>Xnu=matrix{{X0,X1,X2,X3}}*substitute(Z1nu,R);
	>Bnu=substitute(basis(nu,A),R); 
	>(m,M)=coefficients(Xnu,Variables=>{s0_R,s1_R,s2_R},Monomials=>Bnu);
	>M -- this is the matrix representation in degree nu
\end{verbatim}
}

A primary decomposition of the $3\times 3$ minors of $M(\phi)_1$, i.e.~$\Fitt^0_1(\phi)$, returns 
$$(X_0^2-X_1^2-X_2^2-X_3^2)\cap(X_2,X_1,X_0^2+2X_0X_3+X_3^2)$$
which corresponds to the implicit equation of the sphere plus one embedded double point $(1:0:0:-1)$. Now, a primary decomposition of the $2\times 2$ minors of $M(\phi)_1$, i.e.~$\Fitt^1_1(\phi)$, returns 
$$(X_2,X_1,X_0+X_3)\cap (X_3,X_2^2,X_1X_2,X_0X_2,X_1^2,X_0X_1,X_0^2)$$
which corresponds to the same embedded point $(1:0:0:-1)$, now with multiplicity one, plus an additional component supported at the origin. Finally, the ideal of $1$-minors of $M(\phi)_1$, i.e.~$\Fitt^2_1(\phi)$, is supported at the origin (i.e.~is empty as a subscheme of $\PP^2$).

The point $(1:0:0:-1)$ is actually a singular point of the parameterization $\phi$ (but not of the sphere itself). Indeed, the line $L=(0:s_1:s_2)$ is a $\PP^1$ that is mapped to $(s_1^2+s_2^2: 0:0:-(s_1^2+s_2^2))$. In particular, the base points of $\phi$, $(0:1:i)$ and $(0:1:-i)$, are lying on this line, and the rest of the points are mapped to the point $(1:0:0:-1)$. Outside $L$ at the source and $P$ at the target, $\phi$ is an isomorphism.
\end{exmp}

\subsection{Regularity of the symmetric algebra}
Hereafter, we give an upper bound for the regularity of the symmetric algebra $\Sc_I$ in our setting. We will use in the course of this paper. We begin with some classical notation. 

Given a finitely generated graded $S$-module $N$, we will denote by $HF_N$ its Hilbert function and by $HP_N$ its Hilbert polynomial. Recall that for every $\mu$, 
\[
HF_N(\mu ):=\dim_k (N_\mu ),
\]
and
\begin{equation}\label{eq:HPHF}
	HP_N (\mu ):=HF_N(\mu )-\sum_i (-1)^i HF_{H_{\mm}^i(N)}(\mu)	
\end{equation}
where $\mm$ denotes the ideal $\mm=(s_0,s_1,s_2)\subset S$.
It is well-known that $HF_{H_{\mm}^i(N)}(\mu)$ is finite for any $i$ and $\mu$, and that $HP_N$ is a polynomial function.

We also recall the definition of three classical invariants attached to a graded module $N$. We will denote 
$$a^i(N):=\sup\{\mu\ |\ H^i_\mm (N)_\mu\neq 0\}, \ \  a^*(N):=\max_i\{a^i(N)\}, \ \ $$ 
and $$\reg(N):=\max_i\{a^i(N)+i\}$$
for the $a^i$-invariant, the $a^*$-invariant and the Castelnuovo-Mumford regularity of $N$, respectively.
Finally, the notation $\mu(J)$ for a given ideal $J$ stands for its minimal number of generators.

The following lemma will be useful in some places (see \cite[Corollary A12]{SZ} for a self contained elementary proof) :

\begin{lem}\label{regdim0}
Let $J$ be a graded ideal in  $S$, generated in degree $d$ such 
that $\dim(S/J)\leq 1$ and $\mu(J_\pp)\leq 3$ for every prime ideal $\mm\supsetneq \pp \supset J$. Then, 
$$\reg (S/J^{sat})\leq 2d-3$$ 
and
$$
\reg (S/J)\leq 3d-3-\indeg (J^{sat})
$$
unless $\mu (J)=2$, in which case $J=J^{sat}$ and $\reg (S/J)=2d-2$.
\end{lem}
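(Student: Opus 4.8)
The plan is to reduce everything to a two-term analysis based on the minimal free resolution of $S/J^{sat}$ over $S = k[s_0,s_1,s_2]$. Since $\dim(S/J) \leq 1$, after saturating we have $\dim(S/J^{sat}) \leq 1$ as well, and $J^{sat}$ is either the unit ideal (trivial), an ideal defining finitely many points, or a saturated ideal whose scheme is a curve union (possibly) points. The local hypothesis $\mu(J_\pp) \leq 3$ at every non-maximal prime $\pp \supseteq J$ says that $S/J^{sat}$ is locally a cut out by at most $3$ equations in codimension $\leq 2$; combined with $\dim \leq 1$ and the Auslander--Buchsbaum formula, this forces $\mathrm{pd}_S(S/J^{sat}) \leq 2$, i.e. $J^{sat}$ has projective dimension $\leq 1$, so its minimal free resolution has the Hilbert--Burch shape
\begin{equation*}
0 \to \bigoplus_{j} S(-b_j) \xrightarrow{\psi} \bigoplus_{i=1}^{r+1} S(-a_i) \to J^{sat} \to 0,
\end{equation*}
with the generators of $J^{sat}$ given (up to a common scalar, which is a unit since the ideal has height $\geq 2$ at each relevant prime) by the maximal minors of $\psi$. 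Here $r = \mathrm{rank}(\psi)$ and one checks $\sum a_i - \sum b_j$ computes the twist governing $H^2_\mm$.

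From the Hilbert--Burch resolution one reads off the regularity directly: $\reg(S/J^{sat}) = \reg(J^{sat}) - 1$ is controlled by $\max_i(a_i - i)$-type quantities and $\max_j(b_j - j)$. The first step is to bound the degrees $a_i$ of the generators. Since $J$ is generated in degree $d$ and $J \subseteq J^{sat}$ with $J^{sat}/J$ of finite length, any minimal generator of $J^{sat}$ has degree $\leq d$; hence all $a_i \leq d$. The key numerical input is then a bound on the $b_j$ (the degrees of the syzygies). Because $\mu((J^{sat})_\pp) \leq \mu(J_\pp) \leq 3$ locally, the matrix $\psi$ has at most $\binom{r+1}{2} \leq$ a controlled size, and by degree-counting in the Hilbert--Burch complex — the alternating sum of the graded Betti numbers must reproduce $HP_{S/J^{sat}}$, which for $\dim \leq 1$ is a constant (the degree of the curve plus finite-length correction) — one gets $\sum b_j - \sum a_i = -($a nonnegative quantity$)$ and each individual $b_j \leq 2d - 2$. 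Feeding $a_i \leq d$ and $b_j \leq 2d-2$ into $\reg(S/J^{sat}) = \max(\max_i a_i - 1,\ \max_j b_j - 2)$ yields $\reg(S/J^{sat}) \leq \max(d-1, 2d-4) = 2d-3$ for $d \geq 2$ (the case $d=1$ being immediate), which is the first claimed inequality.

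For the second inequality, I would compare $S/J$ with $S/J^{sat}$ via the exact sequence $0 \to (J^{sat}/J) \to S/J \to S/J^{sat} \to 0$. The module $N := J^{sat}/J$ has finite length, so $H^0_\mm(S/J) = N$ and $H^i_\mm(S/J) = H^i_\mm(S/J^{sat})$ for $i \geq 1$; thus $\reg(S/J) = \max\big(a^0(N),\ \reg(S/J^{sat})\big) = \max\big(\mathrm{end}(N),\ 2d-3\big)$, where $\mathrm{end}(N)$ is the largest degree in which $N$ is nonzero. So the whole problem collapses to bounding $\mathrm{end}(N) = \mathrm{end}(J^{sat}/J)$. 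Here I would use a Koszul/regularity argument on the ideal $J$ itself: $J$ is generated by forms of degree $d$, and since $\mu(J_\pp) \leq 3$ away from $\mm$, a generic such triple of generators forms a regular sequence locally, so one can sandwich $J$ between a complete intersection $(g_1,g_2,g_3)$ of three degree-$d$ forms and $J^{sat}$; the complete intersection has socle degree $3d - 3$, and $(g_1,g_2,g_3)^{sat}/(g_1,g_2,g_3)$ ends in degree $\leq 3d - 3 - \indeg(J^{sat})$ by linkage (the canonical module computation for a codimension-$3$ complete intersection), which descends to give $\mathrm{end}(J^{sat}/J) \leq 3d - 3 - \indeg(J^{sat})$, hence $\reg(S/J) \leq 3d - 3 - \indeg(J^{sat})$. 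Finally, the exceptional case: if $\mu(J) = 2$ then $J = (g_1, g_2)$ is generated by two forms of degree $d$; away from $\mm$ it is locally a complete intersection of codimension $2$ (by the $\mu(J_\pp) \leq 3$, in fact $\leq 2$, hypothesis and $\dim \leq 1$), which is automatically saturated, so $J = J^{sat}$, the Koszul complex on $g_1, g_2$ is the minimal free resolution, and one computes $\reg(S/J) = a^2(S/J) + 2$ with $a^2(S/J) = 2d - 2 - 3 = 2d-5$... giving $\reg(S/J) = 2d - 2$ after the correct shift bookkeeping, matching the stated value.

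The main obstacle I anticipate is the bound $\mathrm{end}(J^{sat}/J) \leq 3d-3-\indeg(J^{sat})$: controlling precisely where the finite-length module $J^{sat}/J$ vanishes requires either a careful linkage argument relative to a cleverly chosen complete intersection inside $J$ (and verifying such a complete intersection of the right degrees exists, which uses genericity and the local hypothesis), or a direct spectral-sequence / local-cohomology computation relating $H^0_\mm(S/J)$ to $H^1_\mm$ of the curve part and its indeg. Everything else — the Hilbert--Burch structure, reading regularity off the resolution, the long exact sequence in local cohomology — is routine once that numerical input is in hand. (Alternatively, one simply invokes the self-contained elementary argument of \cite[Corollary A12]{SZ} cited in the statement, but the above is how I would reconstruct it.)
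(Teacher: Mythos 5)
Your overall reduction --- bound $\reg(S/J^{sat})$, then bound $\mathrm{end}(J^{sat}/J)$ via the long exact sequence in local cohomology and the identification $H^0_\mm(S/J)=J^{sat}/J$ --- is the right skeleton, and you correctly single out the key numerical input. But both halves have gaps, and neither matches the paper's route. For the first bound the paper does not use Hilbert--Burch at all: it takes two general forms $f,g\in J_d$ (a complete intersection, since $\mathrm{ht}(J)\geq 2$) and applies the liaison regularity formula of \cite{CU02}, $\reg(S/J^{sat})=\reg(S/(f,g))-\indeg\bigl(((f,g):J^{sat})/(f,g)\bigr)\leq(2d-2)-1=2d-3$, the exception $(f,g)\supseteq J^{sat}$ being exactly $\mu(J)=2$ with $J=J^{sat}=(f,g)$. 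Your Hilbert--Burch version needs an honest bound on the syzygy degrees $b_j$, and the ``alternating sum of graded Betti numbers reproduces the Hilbert polynomial'' step only gives one relation among all the $a_i$ and $b_j$, not a bound on any individual $b_j$. (Also $\max(d-1,2d-4)=2d-4$, not $2d-3$; the bound you would actually need is $b_j\leq 2d-1$.)

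The more serious error is in the second bound. When $\dim(S/J)=1$ --- the main case for base loci --- three general forms $g_1,g_2,g_3\in J_d$ generate an ideal of height $2$, not a codimension-$3$ complete intersection, because $V(g_1,g_2,g_3)\supseteq V(J)\neq\emptyset$ in $\PP^2$. The hypothesis $\mu(J_\pp)\leq 3$ makes them a regular sequence locally away from $\mm$, which does not promote them to a global regular sequence, so there is no Artinian Gorenstein quotient with socle in degree $3d-3$ to invoke and the ``canonical module computation for a codimension-$3$ CI'' does not apply. The paper instead uses a duality for the torsion of an \emph{almost} complete intersection, \cite[Lemma 5.8]{Ch04}: with $J'=(g_1,g_2,g_3)$ one has $H^0_\mm(S/J')_a=0$ if and only if $H^0_\mm(S/J')_{3d-3-a}=0$; combined with $(J')^{sat}=J^{sat}$ (your sandwich observation, which is correct and does use $\mu(J_\pp)\leq 3$) and the fact that for $a<d$ one has $H^0_\mm(S/J')_a=J^{sat}_a=0$ whenever $a<\indeg(J^{sat})$, this gives precisely $\mathrm{end}(H^0_\mm(S/J))\leq\mathrm{end}(H^0_\mm(S/J'))\leq 3d-3-\indeg(J^{sat})$. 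That duality is the nontrivial ingredient your plan is missing; it is not a consequence of CI socle duality. (Minor: in the exceptional case $S/(g_1,g_2)$ is CM of dimension $1$, so the relevant invariant is $a^1=2d-3$, not $a^2=2d-5$; you land on $2d-2$ but via incorrect bookkeeping.)
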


\begin{proof}
We may assume that $k$ is infinite. 
Let $f$ and $g$ be two general forms of degree $d$ in $J$. 
They form a complete intersection. Hence, by liaison (see e.g.~\cite[4.1 (a)]{CU02}) 
$$\reg(S/J^{sat})=\reg(S/(f,g))-\indeg(((f,g):J^{sat})/(f,g)).$$
Hence $\reg (S/J^{sat})\leq 2d-3$, unless $(f,g)\supseteq J^{sat}$ in which case $J=(f,g)$.

Let $J'\subseteq J$ be an ideal generated by $3$ general forms of degree $d$. 
As $\mu(J_\pp)\leq 3$ for every prime ideal $\mm\supsetneq \pp $,
$J'$ and $J$ have the same saturation. Therefore, it suffices to show that $H^0_\mm (S/J')_{\mu +d}=0$. Observe that by Koszul duality (see for instance \cite[Lemma 5.8]{Ch04}) 
$H^0_\mm (S/J')_{\mu +d}=0$ is equivalent to $H^0_\mm (S/J')_{2d-3-\mu}=0$. But since 
$2d-3-\mu <d$ we indeed have 
$$H^0_\mm (S/J')_{2d-3-\mu}=(J')^{sat}_{2d-3-\mu}=J^{sat}_{2d-3-\mu}=0$$ 
since  $2d-\mu -3<\indeg ( J^{sat})$.
\end{proof}

\begin{prop}\label{prop:regSym} Assume that $\dim(S/I)\leq 1$ and $\mu(I_\pp)\leq 3$ for every prime ideal $\mm\supsetneq \pp \supset I$, then $\Zc_\bullet$ is acyclic, $a^*(\Sc_I)+1\leq\nu_0:=2d-2-\indeg(I^{sat})$, and
$$
\reg(\Sc_I)\leq \nu_0,
$$ 
unless $I$ is a complete intersection of two forms of degree $d$\footnote{Notice that in this case the  closed image of $\phi$ cannot be a surface.}, in which case
$a^*(\Sc_I)= d-3$ and  $\reg(\Sc_I)=d-1=\nu_0+1$.
\end{prop}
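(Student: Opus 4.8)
The plan is to analyze the approximation complex $\Zc_\bullet$ via the well-known acyclicity criterion for approximation complexes: under the hypothesis that $\dim(S/I) \leq 1$, the sequence $f_0,\ldots,f_3$ is a \emph{proper sequence} (indeed it is even $d$-sequence--like off the irrelevant ideal), and hence $\Zc_\bullet$ is acyclic. This gives $H_0(\Zc_\bullet) = \Sym_S(I) = \Sc_I$ and leaves us to control its local cohomology. The standard tool here is the two spectral sequences arising from the double complex obtained by applying $\Gamma_\mm$ (equivalently, \v{C}ech cohomology) to $\Zc_\bullet$, comparing $H^i_\mm(\Sc_I)$ with the terms $H^j_\mm(\Zc_p)$. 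Since each $\Zc_p$ is a submodule of a free $S$-module twisted appropriately, and $\Zc_p = 0$ for $p > 3$, one reduces everything to estimating $a^i(\Zc_p)$ for $p = 0,1,2,3$. The module $\Zc_0 = S$ contributes only $a^3(S) = -3$; the crux is bounding the regularity of the higher cycle modules $\Zc_1, \Zc_2, \Zc_3$.

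The second main ingredient is to relate the cycles $\Zc_p$ to Koszul homology, via the exact sequences $0 \to \Zc_p \to K_p \to B_{p-1} \to 0$ (where $K_p$ is the $p$-th Koszul module on the $f_i$'s, a free module, and $B_{p-1}$ the boundaries), together with $0 \to B_{p-1} \to \Zc_{p-1} \to H_{p-1}(K_\bullet) \to 0$. The Koszul homology modules $H_j(K_\bullet)$ of $f_0,\ldots,f_3$ have support in $V(I)$, which is finite, so they are Artinian; under the hypothesis $\mu(I_\pp) \leq 3$ at each non-maximal prime over $I$ one controls their structure, and crucially their regularity (or $a^*$-invariant) can be bounded in terms of $d$ and $\indeg(I^{sat})$ — this is exactly where Lemma \ref{regdim0} enters, applied to $J = I$ (and to the ideals of $2$ or $3$ general forms). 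Chasing the two short exact sequences above, starting from $\Zc_4 = 0$ and climbing down to $\Zc_1$, and feeding in the regularity bound $\reg(S/I^{sat}) \leq 2d-3$ (respectively $\reg(S/I) \leq 3d - 3 - \indeg(I^{sat})$), yields the estimate $a^i(\Zc_p) + \text{(shift)} \leq 2d - 2 - \indeg(I^{sat})$ in the appropriate range.

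Concretely, I would proceed in the following order. First, invoke the acyclicity of $\Zc_\bullet$ from $\dim(S/I)\leq 1$. Second, set up the \v{C}ech--hypercohomology spectral sequence $E_1^{p,q} = H^q_\mm(\Zc_{-p}) \Rightarrow H^{p+q}_\mm(\Sc_I)$ (with the appropriate grading bookkeeping coming from the internal degree of the $\Zc_p$'s, which are generated in degree $d$ up to the bidegree of the Rees/symmetric algebra). Third, compute $a^*(\Zc_0) = a^*(S) = -3$ directly. Fourth, use the Koszul/cycle short exact sequences to bound $a^*(\Zc_1), a^*(\Zc_2), a^*(\Zc_3)$: here the input is $\reg(H_j(K_\bullet))$, controlled by Lemma \ref{regdim0} since $V(I)$ is finite and locally defined by at most $3$ equations. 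Fifth, assemble these bounds through the spectral sequence to conclude $a^*(\Sc_I) + 1 \leq 2d - 2 - \indeg(I^{sat})$ and hence $\reg(\Sc_I) \leq \nu_0$. Sixth, examine the exceptional case: if $I$ is a complete intersection of two forms of degree $d$, then $\Zc_\bullet$ is just the Koszul complex on two forms, $\Sc_I = S/(\text{one relation})\cdot(\ldots)$ can be computed explicitly as a twist of a complete intersection quotient, and one reads off $a^*(\Sc_I) = d-3$ and $\reg(\Sc_I) = d - 1$ by hand; the footnote's claim that the image is not a surface is the observation that a parameterization with only two independent syzygies of degree $d$ factors through a curve.

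The main obstacle I anticipate is the fourth step: getting sharp bounds on the $a^i$-invariants of the cycle modules $\Zc_1, \Zc_2$ rather than just their regularity, because the shift $\indeg(I^{sat})$ in $\nu_0$ is delicate and comes precisely from the ``twist'' in the liaison/Koszul-duality argument inside Lemma \ref{regdim0}. One has to track the internal degrees carefully through both the short exact sequences and the spectral sequence, and in particular verify that the potentially bad term $H^1_\mm(\Zc_1)$ (which governs $a^1(\Sc_I)$ and thus the freeness of $M(\phi)_\nu$ in the expected degrees) vanishes in degrees $\geq \nu_0$ — this is what makes $\nu_0 = 2d - 2 - \indeg(I^{sat})$ the correct threshold rather than the cruder $2d-2$. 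The complete-intersection exclusion must be isolated cleanly because it is exactly the case where Lemma \ref{regdim0} itself branches.
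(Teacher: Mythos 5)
Your blueprint matches the paper's high-level strategy: establish acyclicity of $\Zc_\bullet$, use the hypercohomology spectral sequence $H^j_\mm(Z_{j-i})_{\mu+(j-i)d}\Rightarrow H^i_\mm(\Sc_I)_\mu$ (the paper writes $Z_{i+j}$ but the proof for $i=1$ makes clear that $Z_{j-i}$ is meant) to reduce the bound on $a^*(\Sc_I)$ to bounds on the local cohomology of the cycle modules $Z_1,Z_2$, and feed in Lemma \ref{regdim0}. Where you diverge from the paper is in the two key local-cohomology estimates, and one of your stated ingredients is in fact a detour.

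First, to control $H^3_\mm(Z_2)_{\mu+2d}$ in the $i=1$ case, the paper does not chase through Koszul homology: it invokes the graded duality $H^3_\mm(Z_2)\simeq(Z_1)^*[3-4d]$ from \cite[Lemma 4.1]{BC05}, which reduces the vanishing for $\mu\geq\nu_0$ to a pure degree count on $Z_1\subset S(-d)^4$, with no regularity bound on any Koszul homology module needed. Your proposed $Z/B/H$-chase does work, but observe that it also does not need $\reg(H_j(K_\bullet))$ for $j\geq 1$: because $H_1(K_\bullet)$ is Artinian, $H^i_\mm(H_1)=0$ for $i\geq 1$, so $H^2_\mm(B_1)\hookrightarrow H^2_\mm(Z_1)$, and then $a^3(Z_2)\leq\max\{a^3(K_2),a^2(Z_1)\}=\max\{2d-3,a^0(S/I)\}$ with $a^0(S/I)$ controlled by Lemma \ref{regdim0}. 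So your worry about bounding $\reg(H_j(K_\bullet))$ in terms of $d$ and $\indeg(I^{sat})$ — which is not an immediate consequence of Lemma \ref{regdim0} applied to $J=I$ — is actually a red herring; what the chase genuinely needs are $a^0(S/I)$ and $a^1(S/I)$, and those \emph{are} what Lemma \ref{regdim0} gives.

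Second, for $a^2(\Sc_I)$ the paper abandons the spectral sequence entirely and uses the exact sequence $0\to K\to\Sc_I\to\Rc_I\to 0$, where $K$ is supported on $V(I)$ so $H^2_\mm(K)=0$, reducing to $a^2(\Rc_I)=\max_{t\geq 1}\{a^1(S/I^t)-td\}$ and then to $a^1(S/I)$ via the surjection $(S/I)(-td)^{\binom{t+3}{3}}\twoheadrightarrow I^t/I^{t+1}$. Your spectral-sequence route (the only possibly nonzero $E_1$-term for $i=2$ is $H^3_\mm(Z_1)_{\mu+d}$, which, once $H^3_\mm(S(-d)^4)_{\mu+d}$ vanishes, is identified with $H^1_\mm(S/I)_{\mu+d}$) does give $a^2(\Sc_I)\leq\nu_0-2$ as required, so it is a valid alternative; the Rees-algebra detour, however, yields the sharper $a^2(\Rc_I)\leq d-4$ independently of $\indeg(I^{sat})$. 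Your treatment of the exceptional case ($\mu(I)=2$) is gestured at rather than carried out, but the plan of explicit computation for a two-generator complete intersection is the right one and matches where Lemma \ref{regdim0} branches.
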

\begin{proof} The acyclicity of $\Zc_\bullet$ is proved in \cite[Lemma 4.2]{BC05}. Furthermore, $H^i_\mm(\Sc_I)=0$ for $i\notin \{0,1,2,3\}$, so we only need to examine $H^i_\mm(\Sc_I)$ for $i\in \{0,1,2,3\}$. The case $i=0$ is treated in \cite[Theorem 4.1]{BC05}. The proof  uses the spectral sequence
\begin{equation}\label{eq:spectseq}
	H^j_\mm (Z_{i+j})_{\mu +(i+j)d}\otimes_k R\Rightarrow H^i_\mm ( \Sc_I)_\mu	
\end{equation}
from which we will also deduce the vanishing for $i>0$.

\medskip


For $i=1$, let $\mu \geq \nu_0$. We will show that $H^2_\mm (Z_1)_{\mu +d}=H^3_\mm (Z_2)_{\mu +2d}=0$ as by \eqref{eq:spectseq} it implies that $H^1_\mm(\Sc_I)_\mu=0$  as claimed (recall that $H^1_\mm(Z_0)=H^1_\mm(S)=0$).

By \cite[Lemma 4.1]{BC05}, $H^3_\mm (Z_2) \simeq (Z_1)^*[3-4d]$ and we deduce that
$$H^3_\mm (Z_2)_{\mu +2d}\simeq \left((Z_1)^*[3-4d]\right)_{\mu+2d}\simeq (Z_1)^*_{\mu-2d+3}.$$
But $2d-\mu -3< \indeg ( I^{sat})\leq d$ as $\mu\geq \nu_0$. Hence $(Z_1)_{2d-\mu -3}=0$ since $Z_1\subset S(-d)^4$, proving that $H^3_\mm (Z_2)_{\mu +2d}=0$. 

The exact sequence $0\to Z_1\to S(-d)^4\to S\to S/I\to 0$ provides a graded isomorphism $H^2_\mm(Z_1) \simeq  H^0_\mm (S/I)$.
By Lemma \ref{regdim0}, $\reg (S/I)\leq 3d-3-\indeg (I^{sat})=d+\nu_0 -1$, hence  $H^2_\mm (Z_1)_{\mu +d}=0$.
\medskip

For $i=2$, consider the canonical exact sequence defining $K$
$$ 0 \rightarrow K \rightarrow \Sc_I \rightarrow \Rc_I \rightarrow 0.$$
As $K$ is the direct sum of the $S$-modules $\ker (\Sym^t_S(I) \to I^t)$ that are supported on $V(I)$, we have $H^i_\mm (K)=0$ for $i>\dim S/I$.
It follows that $H^2_\mm(\Sc_I) \simeq H^2_\mm(\Rc_I)$. Now, $a^2(\Rc_I)=\max_{t\geq 1}\{ a^2(I^t)-td\}=\max_{t\geq 1}\{ a^1(S/I^t)-td\}$. 
The surjective map $(S/I)(-td)^{\binom{t+3}{3}}\to I^t/I^{t+1}$ provides the inequality $a^1(I^t /I^{t+1}) \leq a^1(S/I)+td$, and it 
remains to estimate $a^1(S/I)$. 
By Lemma \ref{regdim0}, 
$$a^2(\Rc_I)\leq a^1(S/I) - d\leq  d-4,$$
unless $\mu (I)=2$.  As $\indeg(I^{sat})\leq d$ it follows that 
$a^2(\Rc_I)\leq \nu_0-2$, unless  $I=(f,g)$, in which case $a^2(\Rc_I) =\nu_0-1$. 

\medskip

For $i=3$, $H^3_\mm(Z_0)_\mu =H^3_\mm(S)_\mu=0$ for all $\mu>-3$, hence $a^3(\Sc_I)\leq -3$. Finally, $a^3(\Sc_I)=\oplus_{t\geq 0} H^3_\mm(S)(td)$, hence $a^3(\Sc_I)=-3$.
\end{proof}

\begin{rem}
If $V(I)=\emptyset$ then $a^2 (\Sc_I)=-\infty$ and $a^1 (\Sc_I)=a^1(I)-d=\reg (S/I)-d$.
Also, when $\dim(S/I)=1$ and $V(I)$ is locally a complete intersection, one has
$a^2 (\Sc_I)=a^2(I)-d$ as above and $a^1 (\Sc_I)=\max\{ a^1(I)-d,a^1(I^2)-2d\}$.
This shows in particular that $a^1 (\Sc_I)$ and $a^2 (\Sc_I)$ can be (very quickly) computed 
from $I$, using a dedicated software.
\end{rem}

\section{Fibers of the canonical projection of the graph of $\phi$ onto $\Sc$}\label{sec:main}

Let $\Gamma \subset \PP^2_{\PP^3}$ be the graph of $\phi$ and $R_{+}=(X_0,\ldots,X_3)$ the graded maximal ideal of $R$.  
We have the following diagram
\begin{equation}\label{eq:diagram}
 \xymatrix@1{\Gamma\ \ar[d]_{\pi_1}\ar[rd]^{\pi_2}\ar@{^(->}[r] & \PP^2_{\PP^3_k} \\  \PP^2_k & \PP^3_k}	
\end{equation}
where all the maps are canonical. For any $\pp\in \Proj(R)$, we will denote by $\kappa(\pp)$  its residue field $R_\pp/\pp R_\pp$. 
The fiber of $\pi_2$ at $\pp\in \Proj (R)$ is the subscheme
\[
\pi_2^{-1}(\pp):=\Proj(\Sc_I\otimes_R \kappa(\pp)) \subset \PP^2_{\kappa(\pp)}.
\]
For simplicity, we set $N_\nu^\pp:=\dim_{\kappa (\pp )}(\Sc_I\otimes_{R} \kappa(\pp))_{\nu}$ for the Hilbert function of the fiber $\pi_2^{-1}(\pp)$ in degree $\nu$. It turns out that this quantity governs the support of the Fitting ideals $\Fitt^i_\nu(\phi)$ we are interested in.

\begin{prop}\label{propVanishingFitt}
 For any $\pp\in \Proj (R)$  
\[
\Fitt^i_\nu(\phi)\otimes_R \kappa(\pp)= 
\begin{cases}
	0 & \textrm{ if } \ 0\leq  i<N_\nu^\pp \\
	\kappa(\pp) & \textrm{ if } \  N_\nu^\pp \leq i
\end{cases}
\]
\end{prop}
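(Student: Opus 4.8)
The statement is fundamentally a linear-algebra fact about Fitting ideals, combined with the identification of $\Fitt^i_\nu(\phi)$ with the Fitting ideals of the $R$-module $(\Sc_I)_\nu$. The plan is to localize the whole situation at a prime $\pp\in\Proj(R)$ and to use the general behaviour of Fitting ideals under base change, namely that for any $R$-module $N$ and any $R$-algebra $R\to A$ one has $\Fitt^i(N)\otimes_R A=\Fitt^i(N\otimes_R A)$. Applying this with $A=\kappa(\pp)$ gives
$$\Fitt^i_\nu(\phi)\otimes_R\kappa(\pp)=\Fitt^i\big((\Sc_I)_\nu\otimes_R\kappa(\pp)\big).$$
Now $(\Sc_I)_\nu$ is a finitely generated $R$-module, so $(\Sc_I)_\nu\otimes_R\kappa(\pp)$ is a finite-dimensional $\kappa(\pp)$-vector space. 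By definition of its Hilbert function, $\dim_{\kappa(\pp)}\big((\Sc_I)_\nu\otimes_R\kappa(\pp)\big)=N_\nu^\pp$. A vector space of dimension $N$ over a field has $\Fitt^i$ equal to $0$ for $i<N$ and equal to the whole field for $i\geq N$ (since the zero map from the free module presenting it has, as its only nonvanishing minors, the $0\times 0$ minor; equivalently, a minimal presentation of $\kappa(\pp)^N$ is $0\to\kappa(\pp)^N$, and the ideal of $(N-i)$-minors of the zero map $\kappa(\pp)^N\to 0$... ). This yields exactly the claimed dichotomy.

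Concretely, the key steps in order are: (1) invoke the description given in the text that $\Fitt^i_\nu(\phi)=\Fitt^i((\Sc_I)_\nu)$ as ideals of $R$, generated by the minors of size $\binom{\nu+2}{2}-i$ of a matrix representation $M(\phi)_\nu$; (2) recall and apply the base-change formula for Fitting ideals, reducing to $\Fitt^i$ of $(\Sc_I)_\nu\otimes_R\kappa(\pp)$; (3) observe that this tensor product is, by definition, a $\kappa(\pp)$-vector space of dimension $N_\nu^\pp$; (4) compute the Fitting ideals of a finite-dimensional vector space over a field directly, either from a minimal presentation $\kappa(\pp)^{N_\nu^\pp}\to 0$ or, equivalently, by noting that specializing the matrix $M(\phi)_\nu$ at $\pp$ produces a matrix over $\kappa(\pp)$ whose cokernel has dimension $N_\nu^\pp$, hence whose rank is $\binom{\nu+2}{2}-N_\nu^\pp$, so that its minors of size $\binom{\nu+2}{2}-i$ vanish precisely when $\binom{\nu+2}{2}-i>\binom{\nu+2}{2}-N_\nu^\pp$, i.e.\ when $i<N_\nu^\pp$.

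There is essentially no hard step here; the one subtlety worth stating carefully is the direction of the base-change formula and the fact that it holds without flatness hypotheses (Fitting ideals commute with arbitrary base change, unlike annihilators), so that one genuinely gets $\Fitt^i_\nu(\phi)\otimes_R\kappa(\pp)=\Fitt^i$ of the specialized module rather than merely an inclusion. The only other point to be careful about is bookkeeping with the index conventions: the $i$-th Fitting ideal corresponds to minors of size $(\text{number of rows})-i$ of any presentation matrix, and one must check this matches the convention $\Fitt^i_\nu(\phi)=\Fitt^i((\Sc_I)_\nu)$ used in the text, with $\binom{\nu+2}{2}$ being the number of rows of $M(\phi)_\nu$. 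Once these conventions are aligned, the proof is immediate from the rank computation for the specialized matrix.
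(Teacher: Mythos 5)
Your proposal follows exactly the paper's own argument: base change of Fitting ideals to reduce to $\Fitt^i\bigl((\Sc_I)_\nu\otimes_R\kappa(\pp)\bigr)$, identification of this tensor product as a $\kappa(\pp)$-vector space of dimension $N_\nu^\pp$, and the elementary computation of the Fitting ideals of a finite-dimensional vector space over a field. The added remark tying the index bookkeeping to the rank of the specialized matrix $M(\phi)_\nu(\pp)$ is a correct and useful gloss, but the route is the same.
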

\begin{proof} First, as a consequence of the stability under base change of the Fitting ideals, one has 
\[
\Fitt^i((\Sc_I)_\nu)\otimes_{R} \kappa(\pp)=\Fitt^i((\Sc_I)_\nu\otimes_{R} \kappa(\pp)).
\]
Now, for any $\nu\geq 0$ the $\kappa(\pp)$-vector space 
\[
(\Sc_I)_\nu\otimes_{R} \kappa(\pp)=(\Sc_I\otimes_{R} \kappa(\pp))_\nu
\]
 is of dimension $N_\nu^\pp$. The result then follows from the fact that the $i$-th Fitting ideals of a $\kappa(\pp)$-vector space of dimension $N$ is
 $0$ for $i<N$ and $\kappa(\pp)$ for $i\geq N$.
\end{proof}

This leads us to focus on the behavior of the Hilbert functions  $N_\nu^\pp$. By \eqref{eq:HPHF}, it suffices to control  the local cohomology of the fiber $\Sc_I\otimes \kappa(\pp)$ with respect to the ideal $\mm$. Part of this control is already known,

\begin{lem}[{\cite[Prop.\ 6.3]{Cha12}}]\label{lemFromDimToN}
Let $\pp\in \Proj (R)$ and $v_\pp:=\dim (\Sc_I\otimes_R\kappa(\pp))$. Then,
$$
a^{v_\pp}(\Sc_I\otimes_R\kappa(\pp))=a^{v_\pp} (\Sc_I\otimes_R R_\pp)\leq a^{v_\pp}(\Sc_I) \\
$$
and 	
$$
a^{v_\pp -1}(\Sc_I\otimes_R\kappa(\pp))\leq \max\{ a^{v_\pp -1}(\Sc_I\otimes_R R_\pp), a^{v_\pp} (\Sc_I\otimes_R R_\pp)\} 
\leq \max\{ a^{v_\pp -1}(\Sc_I), a^{v_\pp} (\Sc_I)\} .
$$
\end{lem}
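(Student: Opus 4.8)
The plan is to prove Lemma~\ref{lemFromDimToN} by a standard base-change argument in local cohomology, passing first from the residue field $\kappa(\pp)$ to the localization $R_\pp$ and then from $R_\pp$ to $R$. Throughout, recall that $\mm=(s_0,s_1,s_2)\subset S$ and that all local cohomology is taken with respect to $\mm$, while the $R$-module structure on $\Sc_I$ is untouched by these operations, so that $H^i_\mm(\Sc_I)$ is naturally a graded $R$-module and localizing or tensoring over $R$ commutes with $H^i_\mm(-)$ (both are computed by the $\mm$-\v{C}ech complex on the $S$-variables, whose terms are flat $R$-algebras). Set $v:=v_\pp=\dim(\Sc_I\otimes_R\kappa(\pp))$.

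First I would establish the first displayed equality, $a^{v}(\Sc_I\otimes_R\kappa(\pp))=a^{v}(\Sc_I\otimes_R R_\pp)$. Since $H^i_\mm$ commutes with the flat base change $R\to R_\pp$, we have $H^i_\mm(\Sc_I\otimes_R R_\pp)=H^i_\mm(\Sc_I)\otimes_R R_\pp$ and $H^i_\mm(\Sc_I\otimes_R\kappa(\pp))=H^i_\mm(\Sc_I)\otimes_R\kappa(\pp)$. Now $H^v_\mm(\Sc_I\otimes_R R_\pp)$ is the top nonvanishing local cohomology of a Noetherian graded ring over $R_\pp$ (the top index is exactly $v$ by Grothendieck vanishing applied fiberwise, together with the fact that the generic fiber dimension over $\Spec R_\pp$ equals the closed fiber dimension after suitable localization — or more simply one invokes that $a^j$ vanishes above the fiber dimension). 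By graded Nakayama, since $H^v_\mm(\Sc_I)\otimes_R R_\pp$ is a finitely generated graded module in each degree (each graded piece $H^v_\mm(\Sc_I)_\mu$ is a finitely generated $R$-module, indeed supported on $\Scc$), the top degree $a^v$ in which it is nonzero is preserved under passing to the residue field: $H^v_\mm(\Sc_I\otimes_R R_\pp)_\mu\otimes_{R_\pp}\kappa(\pp)\neq 0$ exactly when $H^v_\mm(\Sc_I\otimes_R R_\pp)_\mu\neq 0$, at least at the extremal degree, giving the equality of $a^v$. The inequality $a^v(\Sc_I\otimes_R R_\pp)\leq a^v(\Sc_I)$ is then immediate, since $H^v_\mm(\Sc_I)\otimes_R R_\pp=0$ in degrees where $H^v_\mm(\Sc_I)$ already vanishes.

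For the second displayed chain of inequalities, the governing fact is the behavior of $a^{v-1}$ under localization, which is exactly the content of \cite[Prop.\ 6.3]{Cha12}: for a finitely generated graded module $M$ over a $*$-Noetherian bigraded setup, $a^{j}(M\otimes_R\kappa(\pp))\leq\max\{a^{j}(M\otimes_R R_\pp),a^{j+1}(M\otimes_R R_\pp)\}$, the jump by one coming from the long exact sequence in local cohomology associated to the Koszul (or \v{C}ech) complex on a system of parameters of $R_\pp$ that cuts out $\pp$; tensoring with $\kappa(\pp)$ introduces at worst a contribution from the next cohomological degree. Applying this with $j=v-1$ and $M=\Sc_I$ gives the first inequality of the chain, and then $a^{j}(\Sc_I\otimes_R R_\pp)\leq a^{j}(\Sc_I)$ for $j=v-1,v$ (flat base change plus vanishing as above) gives the second. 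The main obstacle is purely bookkeeping: making sure the finiteness hypotheses needed for graded Nakayama and for the localization spectral-sequence estimate of \cite{Cha12} are in force, i.e.\ that each graded component $H^i_\mm(\Sc_I)_\mu$ is a finitely generated $R$-module — which holds because $\Sc_I$ is a finitely generated $R$-algebra that is a finitely generated $S$-module in each $R$-degree, so local cohomology with respect to $\mm$ lands in finitely generated $R$-modules. Once that is noted, the proof is a direct citation of \cite[Prop.\ 6.3]{Cha12} combined with flat base change, and there is little left to do.
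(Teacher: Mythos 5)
The paper does not prove this lemma: it imports it verbatim as \cite[Prop.\ 6.3]{Cha12}, and your closing sentence concedes that this citation is essentially the whole content, so on that level you agree with the paper. But the sketch you offer of what the cited result ``must'' do contains a genuine error: you assert that $H^i_\mm$ commutes with both $-\otimes_R R_\pp$ \emph{and} $-\otimes_R\kappa(\pp)$. The first is flat base change; the second is false, since $R_\pp\to\kappa(\pp)$ is not flat and local cohomology does not commute with non-flat base change. Worse, if the identity $H^i_\mm(\Sc_I\otimes_R\kappa(\pp))=H^i_\mm(\Sc_I)\otimes_R\kappa(\pp)$ held in every degree, the lemma would give the \emph{equality} $a^j(\Sc_I\otimes\kappa(\pp))=a^j(\Sc_I\otimes R_\pp)$ for all $j$, and the $\max$ in the second displayed chain would be superfluous. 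The asymmetry of the statement -- an equality at $i=v_\pp$, but only an inequality absorbing a contribution from one degree higher at $i=v_\pp-1$ -- is precisely the symptom of that failure of base change, and your own later appeal to ``a jump by one coming from the long exact sequence'' contradicts your opening claim.

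The correct mechanism is the spectral sequence of the double complex $\Ccc^\bullet_\mm(M)\otimes_{R_\pp}K_\bullet(\underline{x})$, where $M=\Sc_I\otimes_R R_\pp$ and $K_\bullet(\underline{x})$ is the Koszul complex of a regular system of parameters for $R_\pp$ resolving $\kappa(\pp)$. At the top cohomological degree $v_\pp$ the relevant edge map is an isomorphism $H^{v_\pp}_\mm(M)\otimes_{R_\pp}\kappa(\pp)\simeq H^{v_\pp}_\mm(M\otimes_{R_\pp}\kappa(\pp))$, because upper semicontinuity of fiber dimension kills the competing terms $H^{v_\pp+p}_\mm(\Tor^{R_\pp}_p(M,\kappa(\pp)))$ for $p>0$; ordinary Nakayama over the local ring $R_\pp$, applied degreewise to the finitely generated modules $H^{v_\pp}_\mm(M)_\nu$, then gives the equality of $a^{v_\pp}$'s. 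One degree down the spectral sequence no longer degenerates, and the term $\Tor^{R_\pp}_1(H^{v_\pp}_\mm(M),\kappa(\pp))$ sitting in total degree $v_\pp-1$ is exactly what produces the $\max\{a^{v_\pp-1},a^{v_\pp}\}$ on the right. If you keep the sketch, drop the false ``base change for $\kappa(\pp)$'' and run the Koszul/Tor argument from the start.
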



We are now ready to describe the support of the ideals $\Fitt^i_\nu(\phi)$ with $i\geq 0$ and $\nu\geq \nu_0$. We begin with the points on $\Scc$ whose fiber is 0-dimensional.

\begin{thm}[0-dimensional fibers] Let $\pp \in \Proj (R)$.
	If $\dim \pi_2^{-1}(\pp) = 0$ then for all $\nu\geq \nu_0$
	\begin{equation}\label{eqFittDeg}
	\pp \in V(\Fitt^i_\nu(\phi)) \Leftrightarrow \deg(\pi_2^{-1}(\pp))\geq i+1.
	\end{equation}
\end{thm}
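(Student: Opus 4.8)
The plan is to translate the statement about $\Fitt^i_\nu(\phi)$ into a statement about the Hilbert function $N_\nu^\pp$ of the fiber and then to pin down that Hilbert function when the fiber is zero-dimensional. By Proposition \ref{propVanishingFitt}, for a given $\pp\in\Proj(R)$ one has $\pp\in V(\Fitt^i_\nu(\phi))$ precisely when $i<N_\nu^\pp$, i.e.\ when $N_\nu^\pp\geq i+1$. So it suffices to prove that, under the hypothesis $\dim\pi_2^{-1}(\pp)=0$ and for all $\nu\geq\nu_0$, one has the equality
\[
N_\nu^\pp=\deg(\pi_2^{-1}(\pp)).
\]
This is a statement that the Hilbert function of the artinian (up to saturation) fiber algebra $\Sc_I\otimes_R\kappa(\pp)$ has already stabilized to its multiplicity at the value $\nu_0$.

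First I would use the formula \eqref{eq:HPHF} applied to the $S\otimes_R\kappa(\pp)$-module $N:=\Sc_I\otimes_R\kappa(\pp)$, which is a quotient of a polynomial ring in $s_0,s_1,s_2$ over $\kappa(\pp)$. Since $\dim\pi_2^{-1}(\pp)=0$, the scheme $\Proj(N)$ is zero-dimensional, so $\dim N\leq 1$; hence $H^i_\mm(N)=0$ for $i\geq 2$ and the only possibly nonzero local cohomology modules are $H^0_\mm(N)$ and $H^1_\mm(N)$. Write $v_\pp:=\dim(N)$. The Hilbert polynomial $HP_N$ is the constant $\deg(\pi_2^{-1}(\pp))=\deg\Proj(N^{sat})$ when $v_\pp=1$ (and is $0$ when $v_\pp=0$, in which case the fiber is empty — but $\dim\pi_2^{-1}(\pp)=0$ excludes that, so one may take $v_\pp=1$; I should remark on this edge case explicitly). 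Then \eqref{eq:HPHF} gives
\[
N_\nu^\pp=HF_N(\nu)=HP_N(\nu)+HF_{H^0_\mm(N)}(\nu)-HF_{H^1_\mm(N)}(\nu)=\deg(\pi_2^{-1}(\pp))+HF_{H^0_\mm(N)}(\nu)-HF_{H^1_\mm(N)}(\nu).
\]
So everything reduces to showing that both $H^0_\mm(N)_\nu=0$ and $H^1_\mm(N)_\nu=0$ for $\nu\geq\nu_0$, i.e.\ that $a^0(N)<\nu_0$ and $a^1(N)<\nu_0$.

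This is exactly where Lemma \ref{lemFromDimToN} and Proposition \ref{prop:regSym} combine. Since $v_\pp=1$, Lemma \ref{lemFromDimToN} yields
\[
a^{1}(N)=a^{1}(\Sc_I\otimes_R\kappa(\pp))\leq a^{1}(\Sc_I),\qquad
a^{0}(N)=a^{0}(\Sc_I\otimes_R\kappa(\pp))\leq\max\{a^{0}(\Sc_I),a^{1}(\Sc_I)\}.
\]
By Proposition \ref{prop:regSym}, $a^*(\Sc_I)+1\leq\nu_0$ — that is, $a^i(\Sc_I)\leq\nu_0-1$ for all $i$, in particular for $i=0,1$ — so both $a^0(N)$ and $a^1(N)$ are at most $\nu_0-1<\nu_0$. (The exceptional case in Proposition \ref{prop:regSym}, where $I$ is a complete intersection of two forms of degree $d$, is excluded by hypothesis \eqref{H}, since then $\Scc$ would not be a surface; I would note this.) Hence $H^0_\mm(N)_\nu=H^1_\mm(N)_\nu=0$ for every $\nu\geq\nu_0$, and therefore $N_\nu^\pp=\deg(\pi_2^{-1}(\pp))$ for all such $\nu$. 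Combining with the first paragraph, $\pp\in V(\Fitt^i_\nu(\phi))$ iff $\deg(\pi_2^{-1}(\pp))=N_\nu^\pp\geq i+1$, which is \eqref{eqFittDeg}.

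\textbf{Main obstacle.} The routine bookkeeping is easy; the delicate point is ensuring that Lemma \ref{lemFromDimToN} genuinely applies with the right value of $v_\pp$ and that the passage from the local ring $\Sc_I\otimes_R R_\pp$ to the residue field $\Sc_I\otimes_R\kappa(\pp)$ does not create extra cohomology in the relevant degrees — but this is precisely the content of that lemma, so the work has been front-loaded into the preparatory results. A secondary point to handle carefully is the identification $HP_N(\nu)=\deg(\pi_2^{-1}(\pp))$: one must check that the degree of the zero-dimensional scheme $\pi_2^{-1}(\pp)$, defined as $\Proj(\Sc_I\otimes_R\kappa(\pp))$, equals the length of its artinian coordinate ring $N^{sat}$ in high degree, which is standard for a zero-dimensional projective scheme but worth stating.
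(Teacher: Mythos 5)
Your proposal is correct and follows essentially the same route as the paper's proof: reduce via Proposition \ref{propVanishingFitt} to showing $N_\nu^\pp=\deg(\pi_2^{-1}(\pp))$ for $\nu\geq\nu_0$, then combine the equality \eqref{eq:HPHF} with the vanishing $H^i_\mm(\Sc_I\otimes_R\kappa(\pp))_\nu=0$ supplied by Lemma \ref{lemFromDimToN} together with Proposition \ref{prop:regSym}. Your remarks about the edge cases (the $v_\pp=0$ possibility and the complete-intersection exception in Proposition \ref{prop:regSym}) are sound and simply make explicit what the paper leaves implicit.
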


\begin{proof} By Proposition \ref{prop:regSym} and  Lemma \ref{lemFromDimToN}, we have $H^i_\mm(\Sc_I\otimes_R\kappa(\pp))_\nu=0$ for all $\nu\geq \nu_0$ and $i\geq 0$. Then, the conclusion follows from Proposition \ref{propVanishingFitt} and the equality \eqref{eq:HPHF} since the Hilbert polynomial of the fiber $\pi_2^{-1}(\pp)$ is a constant polynomial which is equal to the degree of this fiber. 
\end{proof}

We now turn to points on $\Scc$ whose fiber is 1-dimensional.

\begin{lem}\label{1fiber}
Assume that the $f_i$'s  are linearly independent, the fiber over a closed point $\pp$ of coordinates $(p_0:p_1:p_2:p_3)$ is of dimension 1, and its
unmixed component is defined by $h_\pp\in S$. 
Let $\ell_\pp$ be a linear form with 
$\ell_\pp (p_0,\ldots ,p_3 )=1$ and set  $\ell_i (x):=x_i-p_i \ell_\pp (x)$.  
Then, 
$h_\pp =\gcd (\ell_0(f),\ldots ,\ell_3(f))$ 
and 
$$
I=(\ell_\pp (f))+h_\pp (g_0,\ldots ,g_3)
$$ 
with $\ell_i (f)=h_\pp g_i$ and   $\ell_\pp (g_0,\ldots ,g_3 )=0$.
In particular 
$$
(\ell_\pp (f))+h_\pp (g_0,\ldots ,g_3 )^{sat}\subseteq I^{sat} \subseteq (\ell_\pp (f))+(h_\pp ).
$$
\end{lem}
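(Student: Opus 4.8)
The plan is to work locally at the closed point $\pp$ and exploit the fact that, after the linear change of coordinates given by $\ell_\pp$ and the $\ell_i$, the point $\pp$ becomes a coordinate point. First I would observe that $\ell_\pp(f_0,\dots,f_3)$ and the $\ell_i(f)=f_i-p_i\ell_\pp(f)$ together generate the same ideal $I$, since the change of variables $x_i \mapsto \ell_i, \ell_\pp$ is linear and invertible (here linear independence of the $f_i$ is what guarantees $\ell_\pp(f)\neq 0$, so the new generating set is genuine). Geometrically, the fiber over $\pp$ is $V(I) \cap \{$the locus where $\phi$ hits the line through $\pp\}$, but more concretely a point $s\in\PP^2$ lies in $\pi_2^{-1}(\pp)$ iff $f_i(s) = p_i \lambda(s)$ for a common scalar; dividing by $\ell_\pp(f)$ this says exactly $\ell_i(f)(s)=0$ for all $i$. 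So the (scheme-theoretic) fiber is cut out by the $\ell_i(f)$, $i=0,\dots,3$ (together with $\ell_\pp(f)\neq 0$ on the relevant chart), and its $1$-dimensional unmixed part is the divisorial component, i.e. $h_\pp := \gcd(\ell_0(f),\dots,\ell_3(f))$. That is the first displayed assertion.

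Next I would write $\ell_i(f) = h_\pp g_i$ with $g_i\in S$; the $g_i$ have no common factor by construction of $h_\pp$ as the gcd, so $(g_0,\dots,g_3)$ has codimension $\geq 2$, i.e. $V(g_0,\dots,g_3)$ is finite (or empty). The relation $\ell_\pp(g_0,\dots,g_3)=0$ follows from $\ell_\pp(\ell_0,\dots,\ell_3)=0$: indeed $\sum_i c_i \ell_i(x) = \sum_i c_i x_i - (\sum_i c_i p_i)\ell_\pp(x)$, so taking $c_i$ to be the coefficients of $\ell_\pp$ and using $\ell_\pp(p)=1$ gives $\ell_\pp(\ell_0,\dots,\ell_3) = \ell_\pp(x) - \ell_\pp(x) = 0$; substituting $x\mapsto f$ and dividing by $h_\pp$ yields $\ell_\pp(g_0,\dots,g_3)=0$ in $S$. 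Then $I = (\ell_0(f),\dots,\ell_3(f), \ell_\pp(f)) = (\ell_\pp(f)) + h_\pp(g_0,\dots,g_3)$, which is the displayed identity. Here one should be slightly careful that $I = (f_0,\dots,f_3) = (\ell_0(f),\dots,\ell_3(f),\ell_\pp(f))$: the inclusion $\supseteq$ is clear, and $\subseteq$ holds because $f_i = \ell_i(f) + p_i\ell_\pp(f)$.

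For the sandwich of saturated ideals, the right inclusion $I^{sat} \subseteq (\ell_\pp(f)) + (h_\pp)$ is immediate from $I = (\ell_\pp(f)) + h_\pp(g_0,\dots,g_3) \subseteq (\ell_\pp(f)) + (h_\pp)$ together with the fact that $(\ell_\pp(f),h_\pp)$ is already saturated in the relevant sense — more safely, one just notes $I\subseteq (\ell_\pp(f),h_\pp)$, hence $I^{sat}\subseteq (\ell_\pp(f),h_\pp)^{sat}$, and $(\ell_\pp(f),h_\pp)$ is a complete intersection of positive-dimensional type hence unmixed and saturated (assuming $\ell_\pp(f)$ and $h_\pp$ share no factor; if they did, one should argue the containment directly or remark it still holds as stated). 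The left inclusion $(\ell_\pp(f)) + h_\pp(g_0,\dots,g_3)^{sat} \subseteq I^{sat}$ reduces to showing $h_\pp \cdot (g_0,\dots,g_3)^{sat} \subseteq I^{sat}$: if $a\in (g_0,\dots,g_3)^{sat}$ then $\mm^N a \subseteq (g_0,\dots,g_3)$ for some $N$, so $\mm^N h_\pp a \subseteq h_\pp(g_0,\dots,g_3)\subseteq I$, giving $h_\pp a \in I^{sat}$; and $\ell_\pp(f)\in I\subseteq I^{sat}$ trivially.

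The main obstacle I expect is the very first claim, namely identifying $h_\pp$ with $\gcd(\ell_0(f),\dots,\ell_3(f))$ \emph{as the defining equation of the unmixed $1$-dimensional component of the scheme-theoretic fiber} $\Proj(\Sc_I\otimes_R\kappa(\pp))$ — not merely of the set-theoretic fiber or of $V(\ell_0(f),\dots,\ell_3(f))$ with its reduced structure. One has to check that passing from $\Sc_I\otimes\kappa(\pp)$ (a quotient of the symmetric algebra) to the ideal $(\ell_0(f),\dots,\ell_3(f))$ does not distort the codimension-one part: the key point is that on the chart $\ell_\pp\neq 0$, the fiber of the graph is cut out precisely by the $\ell_i(f)$, and the unmixed part of a $1$-dimensional ideal in the $3$-variable ring $S$ is principal, generated by the gcd of any generating set — so the identification $h_\pp = \gcd(\ell_i(f))$ follows once one knows the fiber ideal and $(\ell_0(f),\dots,\ell_3(f))$ agree up to $\mm$-torsion and the $\ell_\pp(f)$-localization. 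This is where the hypotheses (linear independence of the $f_i$, hence $\ell_\pp(f)\neq 0$; the fiber genuinely $1$-dimensional) are used, and it is the step I would write most carefully.
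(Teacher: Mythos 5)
Your structure is the right one and you correctly locate the soft spot: the identification $h_\pp=\gcd(\ell_0(f),\dots,\ell_3(f))$ is the step that needs a real argument, and your ``fiber is cut out by the $\ell_i(f)$ on the chart $\ell_\pp(f)\neq 0$'' reasoning is set-theoretic/heuristic rather than a proof about the scheme-theoretic fiber ideal of $\Sc_I\otimes\kappa(\pp)$. In particular, you never quite show that the divisorial part of the fiber ideal $J_\pp$ is not affected by what happens on the closed complement $V(\ell_\pp(f))$ (base points live there), nor do you explain why $J_\pp$ and $(\ell_0(f),\dots,\ell_3(f))$ have the same gcd when the two ideals may genuinely differ. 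The rest of your write-up --- the change of generators $I=(\ell_\pp(f),\ell_0(f),\dots,\ell_3(f))=(\ell_\pp(f))+h_\pp(g_0,\dots,g_3)$, the identity $\ell_\pp(g_0,\dots,g_3)=0$, and both inclusions for the saturated ideals --- is correct and is exactly what is needed.

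The paper closes the gap you flag by not arguing geometrically on a chart at all, but by working directly with the presentation of $\Sc_I$. The fiber ideal $J_\pp\subset S$ is generated by the specializations $\overline L:=\sum_i a_ip_i$ of syzygies $L=\sum_ia_ix_i$ of $(f_0,\dots,f_3)$, and $h_\pp$ is by definition $\gcd(\overline{L_1},\dots,\overline{L_t})$ over a generating set of syzygies. The Koszul-type syzygies $L_{(i)}=\ell_i(f)\,\ell_\pp(x)-\ell_\pp(f)\,\ell_i(x)$ satisfy $\overline{L_{(i)}}=\ell_i(f)$, whence $h_\pp\mid h:=\gcd(\ell_i(f))$. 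Conversely, substituting $f_i=p_i\,\ell_\pp(f)+h\,g_i$ into an arbitrary syzygy $\sum_ia_if_i=0$ gives $\overline L\cdot\ell_\pp(f)=-h\sum_ia_ig_i$; since the $f_i$ have no common factor (this is where hypothesis $(\Hc)$, $\dim V(I)\le 0$, enters), $h$ and $\ell_\pp(f)$ are coprime and therefore $h\mid\overline L$ for every syzygy $L$, hence $h\mid h_\pp$. This is precisely the algebraic content you would need to make your ``agree up to $\mm$-torsion and after inverting $\ell_\pp(f)$'' remark into a proof; note you also rely implicitly on the $f_i$ having no common factor (both to separate $h_\pp$ from $\ell_\pp(f)$ and to see that $(\ell_\pp(f),h_\pp)$ is a complete intersection, hence saturated), so it is worth making that use of $(\Hc)$ explicit.
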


\begin{proof}
A syzygy $L=\sum_{i} a_{i}x_i =\sum_{i} a_{i}(p_i \ell_\pp (x)+\ell_i (x))$
provides an equation for the fiber : $\overline{L}=\sum_{i} a_i p_{i}$. 
Recall that $h_\pp =\gcd (\overline{L_1},\ldots ,\overline{L_t})$ where
the $L_j$ are generators of the syzygies of $I$.

The particular syzygy $L_{(i)}:=\ell_i (f)\ell_\pp (x)-\ell_\pp (f)\ell_i (x)$ 
satisfies  $\overline{L_{(i)}}=\ell_i (f)$. It follows that $h_\pp$ divides  $h:=\gcd (\ell_0(f),\ldots ,\ell_3(f))$.
Set $\ell_i (f)=hg_i$ for some $g_i\in S$. 
The $\ell_i$'s span the linear forms vanishing at $p$, and are related by 
the equation $\ell_\pp (\ell_0,\ldots ,\ell_3 )=0$. It follows that
$$
I=(\ell_\pp (f),\ell_0(f),\ldots ,\ell_3 (f))=(\ell_\pp (f))+h(g_0,\ldots ,g_3),
$$
and $\ell_\pp (g_0,\ldots ,g_3)=0$. 
Now if one has a relation $a\ell_\pp (f)+\sum_i b_i hg_i=0$,
then $h$ divides $a$, as  the $f_i$'s have no 
common factor. This in turn shows that $h$ divides $h_\pp$ and completes the proof.
\end{proof}

Notice that  if $p_i\not= 0$, $g_i$ is a linear combination of the 
$g_j$'s for $j\not= i$ as $\ell_\pp (g_0,\ldots ,g_3)=0$. For instance if $p_0\not= 0$,
$I=(\ell_\pp (f))+h(g_1,\ldots ,g_3)$.\medskip

\begin{rem}\label{rem:NoBPfinite}
	The above lemma shows that fibers of dimension 1 can only occur 
	when $V(I)\not= \emptyset$ as $V(I)\supseteq V((\ell_\pp (f),h_\pp ))$. It also 
	shows that 
	$$
	d\deg (h_\pp )\leq \deg (I)
	$$
	if there is a fiber of dimension 1 of unmixed part given by $h_\pp$.
	Furthermore, any element $F$ in $I^{sat}$ of degree $<d$ has to be a multiple
	of $h_\pp$ for any fiber of dimension 1. As $h_\pp$ and $h_{\pp'}$ cannot have a 
	common factor for $\pp \not= \pp'$, the product of these forms divide $F$.
	This gives a simple method to compute 1 dimensional fibers if such an $F$ exists.	
\end{rem}

%


\begin{thm}[1-dimensional fibers]\label{thm:1dimfibers} Let $\pp \in \Proj (R)$ be a $k$-rational closed point.
 If $\dim \pi_2^{-1}(\pp) = 1$ then, setting $\delta:=\deg (\pi_2^{-1}(\pp)))$,
 		\begin{equation*}
			a^0 ( \Sc_I\otimes \kappa(\pp))\leq 	 \nu_0 -\delta -1
		\end{equation*}
and
\begin{equation*}
			\reg ( \Sc_I\otimes \kappa(\pp))\leq 	\max\{ \nu_0 -\delta -1,\reg (I)-d\}\leq \nu_0 -1.
		\end{equation*}
		As a consequence, $a^* ( \Sc_I\otimes \kappa(\pp))\leq 	 \nu_0 -2$.
\end{thm}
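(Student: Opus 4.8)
The plan is to extract the relevant cohomological information about the fiber $\Sc_I\otimes\kappa(\pp)$ from the very explicit description of $I$ provided by Lemma \ref{1fiber}, then combine it with Lemma \ref{lemFromDimToN} and Proposition \ref{prop:regSym}. First I would record what the fiber actually is: by Lemma \ref{1fiber}, after the linear change of variables sending $\pp$ to a coordinate point, $I=(\ell_\pp(f))+h_\pp(g_0,\ldots,g_3)$, and the fiber $\Sc_I\otimes\kappa(\pp)$ is the symmetric algebra of this specialized ideal. Its unmixed one-dimensional component is cut out by $h_\pp$, a form of degree $e:=\deg(h_\pp)$, and the degree $\delta$ of the fiber as a scheme in $\PP^2_{\kappa(\pp)}$ is governed by $e$ (in fact one expects $\delta=e$ plus a contribution from the finitely many embedded/isolated points). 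The key point is that the one-dimensional part is a hypersurface $V(h_\pp)\subset\PP^2$, so its local cohomology is completely understood: $a^1(S/(h_\pp))=e-3$ and more generally the Rees/symmetric algebra of a principal ideal is a polynomial ring, so its $a^1$ is controlled by $e$ and its $a^0$ vanishes.

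The main step is then to bound $a^0(\Sc_I\otimes\kappa(\pp))$. I would use the exact sequence coming from the filtration of $\Sc_I\otimes\kappa(\pp)$ by the component supported on the one-dimensional locus versus the lower-dimensional pieces, or more directly mimic the proof of Proposition \ref{prop:regSym}: write the approximation-complex spectral sequence for the specialized data and track the contribution of $H^0_\mm$. Since $V(\ell_\pp(f),h_\pp)\subseteq V(I)$ by Remark \ref{rem:NoBPfinite} and $d\cdot e\leq\deg(I)$, one gets that the saturation degree of the specialized ideal, and hence the $a^0$-invariant of its symmetric algebra, improves over the generic bound $\nu_0$ precisely by the amount $\delta+1$ — the presence of the one-dimensional fiber "uses up" degrees that would otherwise contribute to $H^0_\mm$. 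Concretely, I expect to show $H^0_\mm(\Sc_I\otimes\kappa(\pp))_\nu=0$ for $\nu>\nu_0-\delta-1$ by relating it, via the $0\to Z_1\to S(-d)^4\to S\to S/\bar I\to 0$ sequence specialized at $\pp$, to $H^0_\mm(S/\bar I)$, whose vanishing range is controlled by $\reg(S/\bar I)$, and $\reg(S/\bar I)$ drops because $\bar I$ has a large common factor $h_\pp$ of degree $e$ with $d\cdot e$ bounded by the degree of the base locus.

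Once $a^0$ is controlled, the regularity statement follows quickly. For $i\geq 1$ the local cohomology $H^i_\mm(\Sc_I\otimes\kappa(\pp))$ is handled by Lemma \ref{lemFromDimToN}: since $v_\pp=\dim(\Sc_I\otimes\kappa(\pp))=2$ here (the fiber has dimension $1$, so the affine cone has dimension $2$), we get $a^2(\Sc_I\otimes\kappa(\pp))\leq a^2(\Sc_I)\leq\nu_0-2$ and $a^1(\Sc_I\otimes\kappa(\pp))\leq\max\{a^1(\Sc_I),a^2(\Sc_I)\}$; the bound $a^1(\Sc_I)\leq\reg(I)-d$ comes from the remark following Proposition \ref{prop:regSym} (or directly from the identification $H^2_\mm(Z_1)\simeq H^0_\mm(S/I)$ used there). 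Combining, $\reg(\Sc_I\otimes\kappa(\pp))=\max\{a^0, a^1+1, a^2+2\}\leq\max\{\nu_0-\delta-1,\ \reg(I)-d,\ \nu_0\}$; to get the claimed $\leq\nu_0-1$ one still needs $\reg(I)-d\leq\nu_0-1$, i.e. $\reg(I)\leq 3d-3-\indeg(I^{sat})$, which is exactly Lemma \ref{regdim0}. Finally $a^*\leq\nu_0-2$ follows because $a^0\leq\nu_0-\delta-1\leq\nu_0-2$ (as $\delta\geq 1$), $a^1\leq\nu_0-2$ (from $a^1(\Sc_I)\leq\nu_0-1$ sharpened, or from $\reg(I)-d$), and $a^2\leq\nu_0-2$.

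The hard part will be pinning down the exact numerology relating $\delta$ to $e=\deg(h_\pp)$ and then proving the sharp inequality $a^0(\Sc_I\otimes\kappa(\pp))\leq\nu_0-\delta-1$ rather than a weaker bound: this requires carefully analyzing the saturation of the specialized ideal $\bar I=(\ell_\pp(f))+h_\pp(g_0,\ldots,g_3)$, using that $\bar I^{sat}\supseteq(\ell_\pp(f))+h_\pp(g_0,\ldots,g_3)^{sat}$ and $\bar I^{sat}\subseteq(\ell_\pp(f),h_\pp)$ from Lemma \ref{1fiber}, together with a liaison or Hilbert-function computation for the complete intersection $(\ell_\pp(f),h_\pp)$ to see that the degree-$\delta$ worth of extra sections forces the improvement. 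Everything else is bookkeeping with the spectral sequence and the already-established invariants of $\Sc_I$.
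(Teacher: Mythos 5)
Your plan for $a^0$ is aimed in the right direction, but the presentation you sketch is not the one the paper uses and is also not quite coherent. Writing "$0\to Z_1\to S(-d)^4\to S\to S/\bar I\to 0$ specialized at $\pp$" conflates the $S$-resolution of $S/I$ (an object with no $R$-structure to specialize) with the specialization of the approximation complex. After normalizing $\pp=(1{:}0{:}0{:}0)$, the paper produces the explicit four-term sequence
\begin{equation*}
0\longrightarrow Z_1'(d)\longrightarrow Z_1(d)\longrightarrow S\longrightarrow \Sc_I\otimes\kappa(\pp)\longrightarrow 0,
\end{equation*}
where $Z_1'$ is the syzygy module of $(f_1,f_2,f_3)$ and the map $Z_1(d)\to S$ sends $(a_0,\ldots,a_3)\mapsto a_0$. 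The $\delta$-improvement enters through $\reg(Z_1')=\reg(I')+1+\delta$, where $I'=(f_1',f_2',f_3')$, $f_i=h_\pp f_i'$, together with Lemma~\ref{regdim0} applied to $I'$ and the inequality $\indeg(I^{\mathrm{sat}})\leq\delta+\indeg((I')^{\mathrm{sat}})$ coming from $h_\pp (I')^{\mathrm{sat}}\subseteq I^{\mathrm{sat}}$.

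The genuine gap is your treatment of $a^1$ and $a^2$. Lemma~\ref{lemFromDimToN} combined with Proposition~\ref{prop:regSym} only gives $a^2(\Sc_I\otimes\kappa(\pp))\leq a^2(\Sc_I)\leq\nu_0-2$ and $a^1(\Sc_I\otimes\kappa(\pp))\leq\max\{a^1(\Sc_I),a^2(\Sc_I)\}\leq\nu_0-1$; these are precisely the bounds valid for \emph{every} fiber, and they yield $\reg\leq\max\{a^0,a^1+1,a^2+2\}\leq\nu_0$ and $a^*\leq\nu_0-1$, which is the Corollary~\ref{cor:key}(ii) bound for zero-dimensional fibers, one worse than the claimed $\reg\leq\nu_0-1$ and $a^*\leq\nu_0-2$. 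In your own max the term $\nu_0$ coming from $a^2+2$ cannot be removed by anything you cite; and the intermediate claim $a^1(\Sc_I)\leq\reg(I)-d$ is unjustified (the remark after Proposition~\ref{prop:regSym} records $a^1(\Sc_I)=\max\{a^1(I)-d,a^1(I^2)-2d\}$, and the $I^2$ term is not controlled by $\reg(I)$). The point of the four-term sequence above is precisely to bypass Lemma~\ref{lemFromDimToN} and get all of $a^0,a^1,a^2$ for the fiber from $\reg(Z_1)$ and $\reg(Z_1')$ at once, so that the $\delta$-shift propagates to every cohomological degree. A smaller but real slip: $\delta=\deg\pi_2^{-1}(\pp)$ equals $\deg h_\pp$ exactly; the isolated or embedded points contribute only to the constant term of the Hilbert polynomial, not to the degree.
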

\begin{proof} 
By Lemma \ref{1fiber}, $I=(f)+h(g_1,g_2,g_3)$ 
 with $h$ the equation of the unmixed part of the fiber. Let $I':=(g_1,g_2,g_3)$. 
	
	We may assume for simplicity that $\pp=(1:0:0:0)$ (i.e. $f=f_0$ and $f_i=hg_i$ for $i=1,2,3$), and we then have an exact sequence
	$$ 
	 0 \rightarrow Z_1'(d) \rightarrow Z_1(d) \rightarrow S \rightarrow \Sc_I\otimes \kappa(\pp) \rightarrow 0,$$
	 with $Z_1'$ the syzygy module of $(f_1,f_2,f_3)$, the first map sending $(b_1,b_2,b_3)$ to
	 $(0,b_1,b_2,b_3)$ and the second  $(a_0,a_1,a_2,a_3)$ to $a_0$.

	It follows that $\reg ( \Sc_I\otimes \kappa(\pp))\leq \max\{ \reg (S),\reg (Z_1)-d-1,\reg (Z'_1)-d-2\}$
	and $a^0 ( \Sc_I\otimes \kappa(\pp))\leq a^2(Z'_1)-d\leq \reg (Z'_1)-d-2$ as $H^0_\mm (S)=H^1_\mm (Z_1)=0$.
	
By Lemma \ref{1fiber}, $h=\gcd (f_1,f_2,f_3)$ is of degree $\delta$. Set $f_i=hf_i'$ for all $i=1,2,3$. 
The syzygies of  $(f_1,f_2,f_3)$ and  $I':=(f_1',f_2',f_3')$ are equal, up to a degree shift by $\delta$,
and the exact sequence $0\to Z'_1 (\delta )\to S(-d+\delta )^3 \to I'\to 0$ shows that :
$$
\reg (Z'_1)=\reg (I')+1+\delta .
$$
Notice that $hI'\subset I$, hence $h(I')^{sat}\subseteq I^{sat}$ and
$\indeg (I^{sat})\leq \delta +\indeg ((I')^{sat})$. By Lemma \ref{regdim0} we obtain
$$
\reg (I')\leq 3(d-\delta)-2-\indeg ((I')^{sat})\leq 3d -2\delta -\indeg (I^{sat}) -2
$$
hence
$$
a^0 ( \Sc_I\otimes \kappa(\pp))\leq 2d-\delta -3 -\indeg (I^{sat})=\nu_0 -\delta -1.
$$ 
Finally, $\reg (Z_1)-d-1=\reg (I)-d\leq  \nu_0 -1$ by Lemma \ref{regdim0}.
\end{proof}


	To conclude the above case discussion, let us state a simple corollary :
	
\begin{cor}\label{cor:key}
Let $\pp \in \Proj (R)$.\\

(i) If  $\pi_2^{-1}(\pp)=\emptyset$ then $a^* ( \Sc_I\otimes \kappa(\pp))=\reg ( \Sc_I\otimes \kappa(\pp))\leq \nu_0 -1$.\\

(ii)  If  $\dim (\pi_2^{-1}(\pp))=0$ then   $a^* ( \Sc_I\otimes \kappa(\pp))\leq \nu_0 -1$ and $\reg ( \Sc_I\otimes \kappa(\pp))\leq  \nu_0$.\\

(iii)  If  $\dim (\pi_2^{-1}(\pp))=1$ then   $a^* ( \Sc_I\otimes \kappa(\pp))\leq \nu_0 -2$ and $\reg ( \Sc_I\otimes \kappa(\pp))\leq  \nu_0-1$.

\end{cor}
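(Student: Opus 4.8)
\emph{Approach.} The plan is to read the three cases off the results already proved. Part (iii) is nothing but the last assertion of Theorem~\ref{thm:1dimfibers}, so no new argument is needed there. For (i) and (ii) I would go through the standard reduction from the fiber $\Sc_I\otimes_R\kappa(\pp)$ to the global invariants of $\Sc_I$. First record that under the standing hypotheses \eqref{H} the parameterization $\phi$ is birational onto a \emph{surface}, so $I$ is not a complete intersection of two forms of degree $d$ and Proposition~\ref{prop:regSym} applies in its non-exceptional form: $a^*(\Sc_I)\leq \nu_0-1$ (equivalently $a^i(\Sc_I)\leq \nu_0-1$ for every $i$) and $\reg(\Sc_I)\leq \nu_0$. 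All the bounds in (i) and (ii) will follow by transporting these inequalities to the fiber through Lemma~\ref{lemFromDimToN}.

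\emph{The two remaining cases.} Write $v_\pp:=\dim(\Sc_I\otimes_R\kappa(\pp))$ and $a^i:=a^i(\Sc_I\otimes_R\kappa(\pp))$. If $\pi_2^{-1}(\pp)=\Proj(\Sc_I\otimes_R\kappa(\pp))=\emptyset$, then $\Sc_I\otimes_R\kappa(\pp)$ is Artinian, hence $v_\pp=0$, $H^i_\mm(\Sc_I\otimes_R\kappa(\pp))=0$ for $i\geq 1$, and therefore $a^*(\Sc_I\otimes_R\kappa(\pp))=a^0=\reg(\Sc_I\otimes_R\kappa(\pp))$; Lemma~\ref{lemFromDimToN} with $v_\pp=0$ gives $a^0\leq a^0(\Sc_I)\leq a^*(\Sc_I)\leq \nu_0-1$, which is (i). If $\dim\pi_2^{-1}(\pp)=0$, then $v_\pp=1$, so $H^i_\mm(\Sc_I\otimes_R\kappa(\pp))=0$ for $i\geq 2$ and $\reg(\Sc_I\otimes_R\kappa(\pp))=\max\{a^0,a^1+1\}$. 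Lemma~\ref{lemFromDimToN} with $v_\pp=1$ yields $a^1\leq a^1(\Sc_I)\leq \nu_0-1$ and $a^0\leq \max\{a^0(\Sc_I),a^1(\Sc_I)\}\leq \nu_0-1$, whence $a^*(\Sc_I\otimes_R\kappa(\pp))\leq \nu_0-1$ and $\reg(\Sc_I\otimes_R\kappa(\pp))\leq \max\{\nu_0-1,\nu_0\}=\nu_0$, i.e.\ (ii). (Alternatively, for (ii) one may quote directly the vanishing $H^i_\mm(\Sc_I\otimes_R\kappa(\pp))_\nu=0$ for $\nu\geq \nu_0$ and all $i\geq 0$ already established in the proof of the theorem on $0$-dimensional fibers.)

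\emph{Main obstacle.} There is no real difficulty: the content is entirely carried by Theorem~\ref{thm:1dimfibers} and Lemma~\ref{lemFromDimToN}. The only points that need care are bookkeeping: one must keep straight that $v_\pp$ (the Krull dimension of the graded module $\Sc_I\otimes_R\kappa(\pp)$) exceeds $\dim\pi_2^{-1}(\pp)$ by one, so that the empty fiber is the Artinian case $v_\pp=0$ and a $0$-dimensional fiber corresponds to $v_\pp=1$, which is exactly what pins down which local cohomology modules can be nonzero and hence which $a^i$ enter $a^*$ and $\reg$; and one must verify once and for all that the exceptional alternative of Proposition~\ref{prop:regSym} is excluded by \eqref{H}, so that the clean bounds $a^*(\Sc_I)\leq \nu_0-1$ and $\reg(\Sc_I)\leq \nu_0$ are available throughout.
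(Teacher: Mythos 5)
Your proof is correct and follows exactly the route the paper leaves implicit when it labels this ``a simple corollary'': part~(iii) is a restatement of the final conclusion of Theorem~\ref{thm:1dimfibers}, while parts~(i) and~(ii) combine the global bound $a^*(\Sc_I)\leq\nu_0-1$ of Proposition~\ref{prop:regSym} (the exceptional case being excluded because \eqref{H} forces $\Scc$ to be a surface, so $I$ cannot be a complete intersection of two forms) with the descent to fibers given by Lemma~\ref{lemFromDimToN}, keeping track that $v_\pp=\dim\pi_2^{-1}(\pp)+1$ so that only $H^i_\mm$ for $i\leq v_\pp$ can be nonzero.
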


Let us now remark that there is no fiber of dimension two (i.e. equal to $ \PP^{2}$) over a point  $\pp \in \Proj (R)$. More generally :

\begin{lem}\label{FibreP2} 
Let $\phi: \PP^{n-1}_k \dto \PP^{m}_k$ be a rational map and $\Gamma$ be the closure of the graph of $\phi$.
The following are equivalent :
(i) $\pi : \Gamma \rightarrow   \PP^m_k$ has a fiber $F_x$ equal to $\PP^{n-1}_x$,
(ii) $\phi$ is the restriction to the complement of a hypersurface of the constant map
sending any point to the zero dimensional and  $k$-rational point $x$. 
\end{lem}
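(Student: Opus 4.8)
The plan is to use two elementary facts about the closure $\Gamma$ of the graph of any rational map $\phi\colon\PP^{n-1}_k\dto\PP^m_k$, say given by forms $f_0,\dots,f_m$ with base ideal $I=(f_0,\dots,f_m)$ and defined on $U:=\PP^{n-1}_k\setminus V(I)$: first, $\Gamma$ is integral of dimension $n-1$, because over $U$ it coincides with the graph of the morphism $\phi|_U$, which projects isomorphically onto the dense open $U$; and second, for the same reason the first projection $\pi_1\colon\Gamma\to\PP^{n-1}_k$ is birational. Note that $\pi_1$ is birational regardless of whether $\phi$ is birational onto its image, so this is available in the generality of the lemma.

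The implication (ii) $\Rightarrow$ (i) is immediate: if $\phi$ is the restriction to $\PP^{n-1}_k\setminus H$ (with $H$ a hypersurface) of the constant morphism with value a $k$-rational point $x$, then the graph of $\phi$ is $(\PP^{n-1}_k\setminus H)\times\{x\}$, whose closure is $\PP^{n-1}_k\times\{x\}=\PP^{n-1}_x$ since $\PP^{n-1}_k\setminus H$ is dense and $x$ is $k$-rational; here $\pi$ is the constant map of value $x$, so $F_x=\pi^{-1}(x)=\Gamma=\PP^{n-1}_x$ (and every other fiber is empty).

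For (i) $\Rightarrow$ (ii) I would argue as follows. Suppose $F_x=\pi^{-1}(x)\cong\PP^{n-1}_{\kappa(x)}$. Its generic point has residue field $\kappa(x)(t_1,\dots,t_{n-1})$, and since the points of the fiber $\pi^{-1}(x)$ map injectively into $\Gamma$ with the same residue fields, it maps to a point $\xi$ of $\Gamma$ with $\kappa(\xi)=\kappa(x)(t_1,\dots,t_{n-1})$. As $\xi$ lies on the integral $k$-variety $\Gamma$ of dimension $n-1$, we have $\mathrm{trdeg}_k\kappa(\xi)\le n-1$; but $\mathrm{trdeg}_k\kappa(\xi)=(n-1)+\mathrm{trdeg}_k\kappa(x)$, so $\mathrm{trdeg}_k\kappa(x)=0$ (hence $x$ is a closed point), and the inequality is an equality, which forces $\xi$ to be the generic point of $\Gamma$. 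Thus $\pi$ carries the generic point of $\Gamma$ to $x$, so by continuity $\pi(\Gamma)\subseteq\overline{\{x\}}=\{x\}$; hence $\pi$ is the constant map of value $x$, the closure of the image of $\phi$ is the point $x$, and $\Gamma=\pi^{-1}(x)\cong\PP^{n-1}_{\kappa(x)}$.

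To finish, I would first identify $\kappa(x)$ with $k$: the birational morphism $\pi_1\colon\Gamma\cong\PP^{n-1}_{\kappa(x)}\to\PP^{n-1}_k$ induces, via the natural inclusion, an isomorphism of function fields $k(t_1,\dots,t_{n-1})\xrightarrow{\ \sim\ }\kappa(x)(t_1,\dots,t_{n-1})$, and since $k$ is algebraically closed in $k(t_1,\dots,t_{n-1})$ while $\kappa(x)$ is algebraic over $k$, this gives $\kappa(x)=k$. Then, writing $x=(x_0:\dots:x_m)$ with $x_i\in k$, the fact that $\phi$ takes the constant value $x$ on the dense set $U$ yields the identities $f_ix_j=f_jx_i$ in $S$ for all $i,j$; choosing an index $i_0$ with $x_{i_0}\ne0$ we get $f_i=(x_i/x_{i_0})f_{i_0}$, hence $I=(f_{i_0})$, so that $\phi$ is defined exactly on the complement of the hypersurface $V(f_{i_0})$ and equals the point $x$ there, which is (ii). The only step needing care is the residue‑field bookkeeping in the third paragraph: one must not presuppose that $x$ is closed or $k$-rational, and I expect that double extraction — closedness of $x$ from the transcendence‑degree bound on $\Gamma$, and $k$-rationality from the birationality of $\pi_1$ — to be the genuinely substantive point; everything else is formal.
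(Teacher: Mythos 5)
Your proof is correct and rests on the same core idea as the paper's: since condition (i) forces the fiber $F_x$ to be a closed subscheme of the integral $(n-1)$-dimensional scheme $\Gamma$ having the full dimension $n-1$, one must have $F_x=\Gamma$, so $\pi$ is constant. You get there by a residue-field and transcendence-degree computation at the generic point of the fiber, whereas the paper compares dimensions directly and invokes irreducibility of $\Gamma$; these are interchangeable realizations of the same dimension count. Where your write-up genuinely adds content is in everything the paper compresses into the words ``Hence $F_x=\Gamma$ showing (ii)'': you extract that $x$ is a closed point from the transcendence-degree inequality, you prove $\kappa(x)=k$ by using birationality of $\pi_1\colon\Gamma\to\PP^{n-1}_k$ to identify the algebraic closures of $k$ in the two function fields, and you derive the hypersurface statement from the proportionality identities $f_i x_j=f_j x_i$, which show the base ideal $I$ is principal generated by any $f_{i_0}$ with $x_{i_0}\neq0$. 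These last two steps are exactly what condition (ii) asserts beyond $F_x=\Gamma$, and the paper leaves them implicit; your version makes the lemma self-contained. The only small stylistic point: in the (i)$\Rightarrow$(ii) direction you could shorten the first part by noting, as the paper does, that $F_x\subseteq\Gamma$ is closed of dimension $n-1$ in an integral scheme of dimension $n-1$, so $F_x=\Gamma$ at once; the transcendence-degree detour is not wrong, but it is doing more work than needed to reach $F_x=\Gamma$ (its real payoff in your argument is establishing that $x$ is closed, which can alternatively be read off from the fact that the fiber over a non-closed point of $\PP^m_k$ would have dimension strictly larger than $n-1$).
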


\begin{proof}
For (i)$\Rightarrow$(ii), notice that $ \PP^{n-1}_k\times \{ x\}$ is a subscheme of the absolutely irreducible 
scheme $\Gamma$ that has same dimension $n-1$. Hence $F_x =\Gamma$ showing (ii). 
\end{proof}

\begin{thm}[2-dimensional fibers]\label{fibreP2} 
For all $\nu\geq \nu_0$,
$$\left\{\pp\in \Proj (R) : HP_{\pi_2^{-1}(\pp)}(\nu) = HP_{\PP^2}(\nu) =\binom{\nu+2}{2} \right\}=\emptyset$$
\end{thm}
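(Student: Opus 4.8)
The plan is to reduce the statement to the case analysis already carried out in Corollary \ref{cor:key}. The Hilbert polynomial condition $HP_{\pi_2^{-1}(\pp)}(\nu) = \binom{\nu+2}{2}$ for $\nu \geq \nu_0$ forces the fiber $\pi_2^{-1}(\pp)$ to have dimension $2$ (a zero- or one-dimensional fiber has a Hilbert polynomial of degree $\leq 1$ in $\nu$, which cannot equal the degree-$2$ polynomial $\binom{\nu+2}{2}$), hence to equal $\PP^2_{\kappa(\pp)}$ scheme-theoretically, at least after we control the local cohomology. So the first step is to observe that if such a $\pp$ existed, then $\dim(\Sc_I \otimes_R \kappa(\pp)) = 2 = \dim \PP^2_{\kappa(\pp)}$, and moreover the Hilbert \emph{function} $N_\nu^\pp$ agrees with $\binom{\nu+2}{2}$ for $\nu \geq \nu_0$ provided the relevant local cohomology modules $H^i_\mm(\Sc_I \otimes_R \kappa(\pp))$ vanish in those degrees.

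Second, I would invoke the regularity estimates. When $\dim \pi_2^{-1}(\pp) = 2$, one can run an argument analogous to (but simpler than) Theorem \ref{thm:1dimfibers}: by Lemma \ref{FibreP2} applied with $n-1 = 2$, $m = 3$, having a $\PP^2$-fiber is equivalent to $\phi$ being (the restriction of) a constant map, which contradicts hypothesis \eqref{H} that $\Scc$ is a surface and $\phi$ is birational onto it. That already gives the conclusion directly — there is no such $\pp$ at all, for any $\nu$, not merely $\nu \geq \nu_0$. So the cleanest proof is: a point $\pp$ in the displayed set would have $HP_{\pi_2^{-1}(\pp)} = HP_{\PP^2}$ as polynomials (two constants of the binomial polynomials agreeing for infinitely many $\nu \geq \nu_0$), forcing $\dim \pi_2^{-1}(\pp) = 2$; but $\pi_2^{-1}(\pp)$ is a closed subscheme of $\PP^2_{\kappa(\pp)}$ of dimension $2$, hence equals $\PP^2_{\kappa(\pp)}$ on its reduction, and then Lemma \ref{FibreP2} forces $\phi$ to be constant, contradicting \eqref{H}.

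Alternatively, and perhaps more in the spirit of the surrounding results, one can avoid Lemma \ref{FibreP2} and argue purely via Hilbert functions: for $\nu \geq \nu_0$ the bound $\reg(\Sc_I) \leq \nu_0$ from Proposition \ref{prop:regSym} together with Lemma \ref{lemFromDimToN} (and Corollary \ref{cor:key}) controls $H^i_\mm(\Sc_I \otimes_R \kappa(\pp))_\nu$; combined with \eqref{eq:HPHF} this shows $N_\nu^\pp = HP_{\pi_2^{-1}(\pp)}(\nu)$ for $\nu \geq \nu_0$, so the hypothesis gives $N_\nu^\pp = \binom{\nu+2}{2}$ for all large $\nu$, whence $\Sc_I \otimes_R \kappa(\pp)$ and the polynomial ring $\kappa(\pp)[s_0,s_1,s_2]$ have the same Hilbert polynomial and the surjection $\kappa(\pp)[s] \twoheadrightarrow \Sc_I \otimes_R \kappa(\pp)$ (in the appropriate graded sense, coming from $\Sym$) must be an isomorphism in high degree, again forcing the fiber to be all of $\PP^2$ and contradicting birationality onto a surface.

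The main obstacle is ensuring that the passage from "same Hilbert polynomial" to "fiber equals $\PP^2$" is rigorous: one must rule out the possibility that $\Sc_I \otimes_R \kappa(\pp)$ has the right Hilbert polynomial but is not actually defining all of $\PP^2$ (e.g.\ nilpotents or a lower-dimensional embedded behavior) — but since $\pi_2^{-1}(\pp) = \Proj(\Sc_I \otimes_R \kappa(\pp))$ is a subscheme of $\PP^2_{\kappa(\pp)}$, having the Hilbert polynomial of $\PP^2$ forces it to have dimension $2$ and multiplicity (leading coefficient) $1/2 \cdot 2! = 1$, hence to be $\PP^2_{\kappa(\pp)}$ itself up to nilpotents; Lemma \ref{FibreP2} then finishes. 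I expect the write-up to be short, essentially a two-line deduction from Lemma \ref{FibreP2} and the observation on degrees of Hilbert polynomials.

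\begin{proof}
Suppose, for contradiction, that there exists $\pp\in \Proj(R)$ and $\nu\geq \nu_0$ with $HP_{\pi_2^{-1}(\pp)}(\nu)=\binom{\nu+2}{2}$. Since $\pi_2^{-1}(\pp)=\Proj(\Sc_I\otimes_R\kappa(\pp))$ is a closed subscheme of $\PP^2_{\kappa(\pp)}$, its Hilbert polynomial has degree at most $2$, with equality if and only if $\dim \pi_2^{-1}(\pp)=2$; and when it has degree $2$ its leading coefficient is at least that of $HP_{\PP^2}$ only if the fiber is all of $\PP^2$. But by Corollary \ref{cor:key}, for any $\pp$ with $\dim(\pi_2^{-1}(\pp))\leq 1$ one has $a^*(\Sc_I\otimes_R\kappa(\pp))\leq \nu_0-1<\nu_0\leq \nu$, so by \eqref{eq:HPHF} the Hilbert function $N_\nu^\pp$ equals $HP_{\pi_2^{-1}(\pp)}(\nu)$, which is then a polynomial in $\nu$ of degree $\leq 1$, hence cannot equal $\binom{\nu+2}{2}$. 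Therefore $\dim \pi_2^{-1}(\pp)=2$, so $\pi_2^{-1}(\pp)$ equals $\PP^2_{\kappa(\pp)}$ on its underlying reduced scheme. By Lemma \ref{FibreP2} (applied with $n-1=2$, $m=3$), this forces $\phi$ to be the restriction to the complement of a hypersurface of the constant map sending every point to $\pp$. This contradicts the assumption \eqref{H} that $\phi$ is birational onto a surface $\Scc\subset \PP^3_k$. Hence no such $\pp$ exists.
\end{proof}
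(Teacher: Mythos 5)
There is a genuine gap in the step that handles low-dimensional fibers. The theorem fixes a single $\nu\geq\nu_0$ and asks whether the \emph{value} $HP_{\pi_2^{-1}(\pp)}(\nu)$ can equal $\binom{\nu+2}{2}$ at that one degree; it is not a statement about equality of polynomials. When $\dim\pi_2^{-1}(\pp)\leq 1$, the Hilbert polynomial $HP_{\pi_2^{-1}(\pp)}$ does indeed have degree $\leq 1$, but a degree-$\leq 1$ polynomial can perfectly well take the value $\binom{\nu+2}{2}$ at the specific integer $\nu$ under consideration (the two polynomials can agree at up to two points). So the inference ``which is then a polynomial in $\nu$ of degree $\leq 1$, hence cannot equal $\binom{\nu+2}{2}$'' does not follow. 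Concretely: a zero-dimensional fiber of degree exactly $\binom{\nu_0+2}{2}$ whose saturated defining ideal has initial degree $\nu_0+1$ would satisfy $N_{\nu_0}^{\pp}=HP_{\pi_2^{-1}(\pp)}(\nu_0)=\binom{\nu_0+2}{2}$ while having $\dim=0$, and your argument does nothing to exclude this.

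What the paper actually does to close exactly this gap is \emph{not} a comparison of polynomial degrees but a bound on the degree in which the fiber's defining ideal acquires a nonzero element. From Proposition \ref{prop:regSym} (the bound $\reg(\Sc_I)\leq\nu_0$) one deduces that the defining ideal of the fiber $\pi_2^{-1}(\pp)$ inside $\PP^2_{\kappa(\pp)}$ is generated in degree at most $\nu_0$; by Lemma \ref{FibreP2} this ideal is nonzero for every $\pp$; hence there is a nonzero form of degree $\leq\nu_0\leq\nu$ vanishing on the fiber, and multiplying up one gets a nonzero form in degree $\nu$, which forces $N_\nu^\pp<\binom{\nu+2}{2}$. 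This is the content that your proof is missing. (Your use of Lemma \ref{FibreP2} to exclude genuine two-dimensional fibers is correct, but that case is the easy one; the subtle point is ruling out a lower-dimensional fiber from accidentally attaining the maximal Hilbert value at the one degree $\nu$ being tested, and that requires the generation-degree bound coming from the regularity estimate, not a degree-of-polynomial argument.)
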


\begin{proof}
It follows from Proposition \ref{prop:regSym} that the defining ideal of the fiber is generated in degree at most
$\nu_0$. As this ideal is non trivial for any point in $\Proj (R)$ by Lemma \ref{FibreP2}, we deduce that $HP_\nu^\pp < \binom{\nu+2}{2}$
for any prime in $\Proj (R)$ if $\nu \geq \nu_0$.
\end{proof}

It is worth mentioning two facts that are direct consequences of the above results. First, the embedded components of $\Fitt^0_\nu(\phi)$ are exactly supported on $V(\Fitt^1_\nu(\phi))\subset V(\Fitt^0_\nu(\phi))=\Scc$. Second, the set-theoretical support of each $\Fitt^i_\nu(\phi)$, $i$ fixed, stabilizes for $\nu \gg 0$; its set-theoretical support then correspond to those points on $\Sc$ whose fiber is either a curve and or a finite scheme of degree greater or equal to $i+1$.

\section{Computational aspects}\label{sec:comp}

In this section, we detail the consequences of the previous results for giving a computational description of 
the singular locus of the parameterization $\phi$ of the surface $\Sc$.

\subsection{Description of the fiber of a given point on $\Scc$}\label{sec:algo}

Given a point $\pp$ on $\Scc$, we summarize the results we obtained with Figure \ref{figure}.
The blue curve corresponds to a finite fiber, i.e.~a point $\pp$ such that  $\dim \pi_2^{-1}(\pp)=0$, the red curve corresponds a one dimensional fiber, i.e.~ a point $\pp$ such that  $\dim \pi_2^{-1}(\pp)=1$, and the black curve represents the Hilbert polynomial of $S$, the coordinate ring of $\PP^{2}_{k}$.

\begin{figure}[!ht]
	\includegraphics{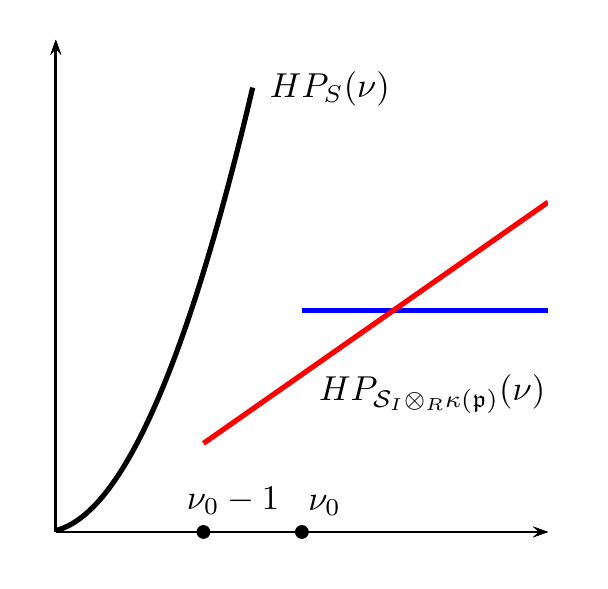}
	\caption{Graph of $N_\nu^\pp$ as a function of $\nu$ for a given $\pp \in \Proj(R)$.}
	\label{figure}
\end{figure}

The fact that $\pp$ belongs or not to the support of $\Fitt_\nu^i(\phi)$ can be checked by the computation of the rank of a matrix representation $M_\nu(\phi)$ evaluated at the point $\pp$. More precisely, we get the following properties:
\begin{itemize}
	\item If $\dim \pi_2^{-1}(\pp)\leq 0$ then for all $\nu\geq \nu_0$ 
	$$\corank(M_\nu(\phi)(\pp))=\deg(\pi_2^{-1}(\pp)).$$
	\item If $\dim \pi_2^{-1}(\pp) = 1$ then for all $\nu\geq \nu_0-1$
	$$\corank(M_\nu(\phi)(\pp))=\deg(\pi_2^{-1}(\pp))\nu+c$$
	where $c\in \ZZ$ is such that $HP_{\pi_2^{-1}(\pp)}(\nu)=\deg(\pi_2^{-1}(\pp))\nu+c$.
\end{itemize}

From here, we get an algorithm to determine the Hilbert polynomial of a fiber $\pi_2^{-1}(\pp)$ of a given point $\pp$ by comparing the rank variation of the matrix $M_\nu(\phi)(\pp)$ for two consecutive integers $\nu$ and $\nu+1$, assuming $\nu \geq \nu_0$. Before giving a more precise description in the form of an algorithm, we refine the distinction between finite and non-finite fibers.

\begin{prop} Let $\pp$ be a point on $\Scc$ and choose an integer $\nu$ such that $$\nu_0\leq \nu\leq 2d-2.$$ 
If $\corank(M_{\nu}(\phi)(\pp))\leq \nu$ then $\pi_2^{-1}(\pp)$ is necessarily a finite fiber.
\end{prop}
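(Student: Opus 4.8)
The plan is to argue by contradiction: suppose $\pi_2^{-1}(\pp)$ is not finite, i.e.\ $\dim\pi_2^{-1}(\pp)=1$ (it cannot be $2$-dimensional by Theorem \ref{fibreP2}). Then by Theorem \ref{thm:1dimfibers}, setting $\delta:=\deg(\pi_2^{-1}(\pp))\geq 1$, we have $a^*(\Sc_I\otimes\kappa(\pp))\leq\nu_0-2$, so in particular $H^i_\mm(\Sc_I\otimes\kappa(\pp))_\nu=0$ for all $i$ and all $\nu\geq\nu_0-1$. Hence for $\nu\geq\nu_0-1$ the Hilbert function equals the Hilbert polynomial: $N^\pp_\nu=HP_{\pi_2^{-1}(\pp)}(\nu)=\delta\nu+c$ for some integer $c$. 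On the other hand, $\corank(M_\nu(\phi)(\pp))=N^\pp_\nu$ since $M(\phi)_\nu$ presents $(\Sc_I)_\nu$ and corank is stable under the base change $R\to\kappa(\pp)$ (this is exactly the content behind Proposition \ref{propVanishingFitt}).

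So the hypothesis $\corank(M_\nu(\phi)(\pp))\leq\nu$ for some $\nu$ with $\nu_0\leq\nu\leq 2d-2$ gives $\delta\nu+c\leq\nu$. I would combine this with a second evaluation. The natural choice is to also use that $N^\pp_{\nu-1}=\delta(\nu-1)+c\geq 0$ is still valid, because $\nu-1\geq\nu_0-1$, which is within the range where local cohomology vanishes. Subtracting, $\delta\nu+c\leq\nu$ together with $\delta(\nu-1)+c\geq 0$ yields $\delta\leq\delta\nu+c\leq\nu$, i.e.\ just $\delta\leq\nu$, which is not yet a contradiction. The real leverage must come instead from the $a^0$-bound: $a^0(\Sc_I\otimes\kappa(\pp))\leq\nu_0-\delta-1$, which forces $N^\pp_\nu=\delta\nu+c$ already for $\nu\geq\nu_0-\delta$. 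Evaluating at $\nu=\nu_0-\delta$ and noting $N^\pp_{\nu_0-\delta}\geq 0$ while $N^\pp_{\nu_0-\delta}$ is a value of a \emph{strictly increasing} linear polynomial, one pins down $c$. Concretely: for a $1$-dimensional fiber whose unmixed part has equation $h_\pp$ of degree $\delta$, the fiber is (up to lower-dimensional components that don't affect the Hilbert polynomial) a divisor of degree $\delta$ in $\PP^2$, whose Hilbert polynomial is $\binom{\nu+2}{2}-\binom{\nu-\delta+2}{2}=\delta\nu+\frac{3\delta-\delta^2}{2}$. Then $\corank(M_\nu(\phi)(\pp))=\delta\nu+\frac{3\delta-\delta^2}{2}$.

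The hypothesis becomes $\delta\nu+\frac{3\delta-\delta^2}{2}\leq\nu$, i.e.\ $(\delta-1)\nu\leq\frac{\delta^2-3\delta}{2}=\frac{\delta(\delta-3)}{2}$. If $\delta=1$ the left side is $0$ and the right side is $-1<0$, a contradiction. If $\delta\geq 2$, then using $\nu\geq\nu_0=2d-2-\indeg(I^{sat})$ together with Remark \ref{rem:NoBPfinite}, which gives $d\delta\leq\deg(I)$ and hence $\delta$ is bounded in terms of $d$, while $\nu\geq\nu_0$ forces $\nu$ large relative to $\delta$; more directly, $(\delta-1)\nu\leq\frac{\delta(\delta-3)}{2}<\frac{\delta^2}{2}$ forces $\nu<\frac{\delta^2}{2(\delta-1)}\leq\delta$ for $\delta\geq 2$, contradicting $\delta\leq\frac{\deg(I)}{d}$ combined with $\nu\geq\nu_0\geq\delta$ (one checks $\nu_0\geq d-1\geq\delta$ using $d\delta\leq\deg(I)\leq d^2$ and $\indeg(I^{sat})\leq d-\delta$, the latter from $h_\pp\mid F$ in Remark \ref{rem:NoBPfinite}). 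I expect the bookkeeping in this last inequality chain — reconciling the upper bound $\nu<\delta$ coming from the corank hypothesis with the lower bound $\nu\geq\nu_0\geq\delta$ coming from Proposition \ref{prop:regSym} and Remark \ref{rem:NoBPfinite} — to be the only delicate point; everything else is a direct substitution into the already-established regularity estimates.
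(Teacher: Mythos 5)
Your approach is conceptually the same as the paper's: show that for a $1$-dimensional fiber the Hilbert polynomial $HP_{\pi_2^{-1}(\pp)}(\nu)=\delta\nu+\tfrac{3\delta-\delta^2}{2}+N$ (with $N\geq 0$) equals the corank for $\nu\geq\nu_0$ (by the regularity estimates of Theorem \ref{thm:1dimfibers}), and then check this exceeds $\nu$. The $\delta=1$ case is handled correctly. However, the bookkeeping for $\delta\geq 2$ has a genuine gap.

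You want to conclude via $\nu<\delta$ (from the corank hypothesis) together with $\nu\geq\nu_0\geq\delta$. The second inequality is not established, and in fact is not always true. Your justification runs through two unproven claims: (a) ``$\indeg(I^{sat})\leq d-\delta$, from $h_\pp\mid F$ in Remark \ref{rem:NoBPfinite}'' --- but that Remark says only that \emph{if} $F\in I^{sat}$ has degree $<d$ \emph{then} $h_\pp\mid F$; it does not assert such an $F$ exists. If anything, it yields a \emph{lower} bound $\indeg(I^{sat})\geq\delta$ for such elements, not an upper bound. When $I=I^{sat}$ is generated in degree $d$ one has $\indeg(I^{sat})=d$ and hence $\nu_0=d-2$. (b) ``$d-1\geq\delta$'' --- but the Remark gives $d\delta\leq\deg(I)$, and the paper itself only states $\delta\leq d$, so $\delta=d$ is not excluded. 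Thus the chain $\nu_0\geq d-1\geq\delta$ fails exactly in the borderline regime $\nu_0=d-2$, $\delta\in\{d-1,d\}$, which is also the case the paper treats separately.

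The fix stays entirely within your framework and is actually simpler than what you attempted: you only need $\nu\geq\delta-2$, which always holds since $\nu\geq\nu_0\geq d-2\geq\delta-2$. Plugging this into $(\delta-1)\nu\leq\tfrac{\delta(\delta-3)}{2}$ gives $(\delta-1)(\delta-2)\leq\tfrac{\delta(\delta-3)}{2}$, i.e.\ $\delta^2-3\delta+4\leq 0$, which has negative discriminant and hence no solution. The paper's own presentation reaches the same conclusion by observing that $HP_{\pi_2^{-1}(\pp)}(\nu)=\tfrac{\delta(2\nu+3-\delta)}{2}+N$ is increasing in $\delta$ on $(-\infty,\nu+\tfrac32]$, equals $\nu+1+N$ at $\delta=1$, and is symmetric about $\delta=\nu+\tfrac32$, so the only potentially problematic case $\nu=d-2$, $\delta=d$ gives the same value as $\delta=d-1$; the two arguments are algebraically equivalent, but the paper's version handles the edge case transparently whereas yours, as written, does not.
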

\begin{proof} We know that the Hilbert function of the fiber $\pi_2^{-1}(\pp)$ coincides  with its Hilbert polynomial for all $\nu\geq \nu_{0}$. Therefore, to prove our claim it is enough to show that if the fiber $\pi_2^{-1}(\pp)$ of $\pp$ is of dimension 1, then 
$$\corank(M_{\nu}(\phi)(\pp))=HP_{\pi_2^{-1}(\pp)}(\nu)\geq \nu+1.$$ 
So, assume that $\pi_2^{-1}(\pp)$ is 1-dimensional and set $\delta:=\deg(\pi_2^{-1}(\pp))$ so that 
	$$  HP_{\pi_2^{-1}(\pp)}(\nu)=\delta\nu+1-\binom{\delta-1}{2}+N$$
where $\binom{\delta-1}{2}$ is the arithmetic genus of the unmixed dimension one part of the fiber $\pi_2^{-1}(\pp)$ and $N\geq 0$ is the degree of its remaining finite part. Observe that $\delta\leq d$. 
Now, a straightforward computation yields
$$HP_{\pi_2^{-1}(\pp)}(\nu)=\frac{\delta(2\nu+3-\delta)}{2}+N.$$
From here, we see that $HP_{\pi_2^{-1}(\pp)}(\nu)$ is an increasing function of $\delta$ on the interval $[-\infty;\nu+\frac{3}{2}]$
which takes value ${\nu+1+N}$ for $\delta=1$. Observe moreover that $\nu+\frac{3}{2}\geq \delta$ as soon as  $\nu \geq d-1$. And if $\nu=d-2$, then $HP_{\pi_2^{-1}(\pp)}(d-2)$ takes the same value for $\delta=d-1$ and $\delta=d$.
\end{proof}

An interesting consequence of the above proposition is that for all integer $0\leq n \leq 2d-2$
\begin{equation*}
	\corank(M_{2d-2}(\phi)(\pp))=n \Leftrightarrow \pi_2^{-1}(\pp) \textrm{ is finite of degree } n.	
\end{equation*}
Therefore, as soon as $d\geq 2$ we have
$$\corank(M_{2d-2}(\phi)(\pp))=1 \Leftrightarrow \pp \textrm{ has a unique preimage}.$$
Observe in addition that once we know that a point $\pp$ has a unique preimage, this preimage can be computed from the kernel of the transpose of $M_{2d-2}(\phi)(\pp)$. Indeed, this kernel is generated by a single vector. But it is clear by the definition of $M_{2d-2}(\phi)$ that the evaluation of the basis of $S_\nu$ chosen to build $M_{2d-2}(\phi)$ evaluated at the point $(s_0:s_1:s_2)$ is also a nonzero vector in the kernel of the transpose of $M_{2d-2}(\phi)(\pp)$. Hence, from this property it is  straightforward to compute the pre-image of $\pp$ if it is unique.

%
%
%
 \medskip
 
 \noindent \hrulefill
 
  \textbf{Algorithm} : Determination of the characters of a fiber $\pi_2^{-1}(\pp)$

 \noindent \hrulefill 
 
 \medskip
 
  \textbf{Input:} a parameterization $\phi$ of a surface $\Scc$ satisfying \eqref{H} and a point $\pp \in \PP^3$.
  
  \textbf{Output:} the hilbert polynomial of the fiber $\pi_2^{-1}(\pp)$.
 
\medskip

\begin{enumerate}
	\item[1.] Pick an integer $\nu\geq \nu_0$ (e.g.~$\nu=2d-2$ which is always valid).
	\item[2.]  Compute a matrix representation $M_{\nu}(\phi)$.
	\item[3.] Compute $r_{\nu}:=\corank(M_{\nu}(\phi))(\pp)$.
	\item[4.] If $r_{\nu}=0$ then \emph{$\pp$ does not belong to $\Scc$} (and stop).
	\item[5.] If $0<r_{\nu}\leq \nu$ then $\pi_2^{-1}(\pp)$ is a finite set of $r_{\nu}$ points, counted with multiplicity.
	\item[6.] If $r_{\nu}>\nu$ then compute $M_{\nu+1}(\phi)$.
	\item[7.] Compute $r_{\nu+1}:=\corank(M_{\nu+1}(\phi))(\pp)$.
	\item[8.] If $r_{\nu}=r_{\nu+1}$ then $\pi_2^{-1}(\pp)$ is a finite set of $r_{\nu}$ points, counted with multiplicity.
	\item[9.] If $r_{\nu}<r_{\nu+1}$ then $\dim(\pi_2^{-1}(\pp))=1$ and 
	\begin{equation*}
	HP_{\pi_2^{-1}(\pp)}(\nu)=(r_{\nu+1}-r_{\nu})\nu+r_{\nu}-(r_{\nu+1}-r_{\nu})\nu.		
	\end{equation*}
	In other words, $\pi_2^{-1}(\pp)$ is made of a curve of degree $r_{\nu+1}-r_{\nu}$ and of a finite set of  
	\begin{equation*}
	\binom{r_{\nu+1}-r_{\nu}-1}{2}+r_{\nu}-(r_{\nu+1}-r_{\nu})\nu-1.	
	\end{equation*}
\end{enumerate}

 \noindent \hrulefill
 
 \medskip

\subsection{Pull-back to the parameter space}

So far, we have seen that the fibers of $\pi_2$ can be described from the Fitting ideals $\Fitt^i_\nu(\phi)$ which are homogeneous ideals in the ring $R$ of implicit variables. However, for applications in geometric modeling, it is also  interesting to get such a description of the singular locus of $\phi$ in the source space $\PP^2$ in place of the target space $\PP^3$. From a computational point of view, this also allows to reduce the number of ambient variables by one.
 For that purpose, we just have to pull-back our Fitting ideals through $\phi$. 
 For all integers $i\geq 0$ and $\nu\geq \nu_0$ we define the ideals of $S$
	$$ \phi^{-1}(\Fitt_\nu^i):=\phi^\sharp(\Fitt_\nu^i).S$$	

It is clear that for all $i$ we have $\phi^{-1}(\Fitt_\nu^i) \subset I$, that is to say that the base points of $\phi$ are always contained in the support of $\phi^{-1}(\Fitt_\nu^i)$ for all $i\geq 0$ and all $\nu\geq \nu_0$. By Lemma \ref{fibreP2}, this support is exactly the base points of $\phi$ if $i=\binom{\nu+2}{2}-1$. In other words we have $$\phi^{-1}\left(\Fitt_\nu^{\binom{\nu+2}{2}-1} \right)=I, \hspace{.2cm} \forall \, \nu \geq \nu_0.$$

\medskip

For all $\nu\geq \nu_0$, we clearly have $\phi^{-1}(\Fitt_\nu^0)=(0) \subset S$. Notice that this behavior is similar to substituting the parameterization $\phi$ into an implicit equation of $\Scc$ which returns $0$. However, $\phi^{-1}(\Fitt_\nu^1)$ is nonzero since we have assumed $\phi$ birational. Actually, as a consequence of the results of Section \ref{sec:main}, the sequence of ideals 
$$\phi^{-1}(\Fitt_\nu^1) \subset \phi^{-1}(\Fitt_\nu^2) \subset \cdots \subset  \phi^{-1}\left(\Fitt_\nu^{\binom{\nu+2}{2}-1} \right)=I$$
yields a filtration of $\PP^2$ which is in correspondence with the singularities of $\phi$. Here is an illustrative example where we focus on the singularities of the parameterization $\phi$ in a situation where $\Scc$ is smooth and very simple.

\begin{exmp} Take four general linear forms $l_0,l_1,l_2,l_3$ in $S_1$ and consider the parameterization
$$\phi: \PP^2_\CC \rightarrow \PP^3_\CC: (s_0:s_1:s_2) \rightarrow (l_1l_2:l_0l_3:l_0l_1:l_0l_1).$$
The closed image of $\phi$ is the plane of equation $\Scc: X_2-X_3=0$. There are three base points that are all local complete intersections: 
$$P_{01}=\{l_0=0\}\cap \{l_1=0\}, \ P_{02}=\{l_0=0\}\cap \{l_2=0\}, \ P_{13}=\{l_1=0\}\cap \{l_3=0\}.$$ 
The fiber of each base point by $\pi_1$ (see the diagram \eqref{eq:diagram})  gives a line on $\Scc$, the projection of the associated exceptional divisor, say $D_{01}, D_{02}$ and $D_{13}$. The equations of the graph $\Gamma$ are $l_0X_0-l_2X_2,l_1X_1-l_3X_2,(l_3-l_2)X_2, X_2-X_3$ so that we have
$$D_{01}: \{X_2=0\}\cap \Scc, \ D_{02}: \{l_1(P_{02})X_1-l_3(P_{02})X_2=0\}\cap \Scc,$$ 
$$D_{13}: \{l_0(P_{13})X_0-l_2(P_{13})X_2=0\}\cap \Scc.$$
Now, let $P$ be a point on $\Scc$. If $P \notin D_{01}\cup D_{02} \cup D_{13}$ then $\pi_1(\pi_2^{-1}(P))$ does not contain any base point and is necessarily zero-dimensional. Moreover, 
$\pi_1 \pi_2^{-1}(D_{01}\cap D_{13})=\{ l_1=0\}$, so that we have the one-dimensional fiber over $D_{01}\cap D_{13}=(0:1:0:0)$ that goes through only two of the three base points. A similar property appears for $\pi_1 \pi_2^{-1}(D_{01}\cap D_{02})=\{ l_0=0\}$.

Notice that the three lines $D_{01},D_{02}, D_{13}$ are not in the image of $\phi$ except for the two points: $(0:1:0:0)$ and $(1:0:0:0)$. Moreover, the fibers over these two points are lines and they are the only points with a one-dimensional fiber. 
\end{exmp}

\section{Link with multiple-point schemes of finite maps}\label{sec:link}

Given a finite map of schemes $\varphi:X\rightarrow Y$ of codimension one, its source and target double-point cycles have been extensively studied in the field of intersection theory (see e.g.~\cite{KLU96,KLU92,Pie78,Tei77}). In \cite{MP89}, Fitting ideals are used to give a scheme structure to the multiple-point loci. 

Under the hypothesis that $\phi:\PP^2\rightarrow \PP^3$ is a finite map, our results are partially contained in the above literature. Recall that if $\phi$ has no base point then $\phi$ is finite (see Remark \ref{rem:NoBPfinite}).
The contribution of our paper is hence mainly an extension to the case where $\phi$ is not finite. It is known that it is always possible to remove the base points by blowing-up, and this is exactly the role of the symmetric algebra in our approach of matrix representations, but still some one-dimensional fibers might remain. 

In loc.~cit.~the basic ingredient is the direct image of the structural sheaf of $\PP^2$: $\varphi_*(\Oc_{\PP^2})$. Since $\varphi$ is assumed to be finite, then $\varphi_*(\Oc_{\PP^2})$ is a coherent sheaf and hence one can consider its Fitting ideal sheaves \cite{MP89}. In our approach, we did not considered this sheaf, but the sheafification of the graded part of degree $\nu$ of the symmetric algebra $\Sc_I$ ($\nu\geq \nu_0$). It turns out that they are isomorphic as soon as $\phi$ is finite. Let us be more precise. 

Assume that $\phi$ is finite (e.g.~$\phi$ has no base point). In the following diagram
$$
\xymatrix@1{ \Gamma \subset \PP^2_k\times\PP^3_k \ar[d]_{\pi_1} \ar[rd]^{\pi_2} &  \\     \PP^2_k \ar[r]^{\phi} & \PP^3_k}
$$
the morphism ${\pi_1}_{|\Gamma}:\Gamma \rightarrow \PP^2_k$ is an isomorphism, so that $ \pi_1^{\sharp}: \Oc_{\PP^2}\simeq {\pi_1}_*({\Oc_{\Gamma}})$ by definition. In particular, we have the sheaf isomorphisms
\begin{equation}\label{eq:phiPP1}
	{\pi_2}_{*}(\Oc_\Gamma) =  \phi_*{\pi_1}_*({\Oc_{\Gamma}}) \simeq \phi_*(\Oc_{\PP^2}).
\end{equation}
Given an integer $\nu$, we can consider two sheaves of $\Oc_{\PP^3}$-modules: 
$${\pi_2}_*(\Oc_\Gamma\otimes_{\Oc_{\PP^2\times \PP^3}} \pi_1^*(\Oc_{\PP^2}(\nu)))$$
and the sheaf associated to the graded $R$-module $(\Sc_I)_\nu$ that we will denote by $\widetilde{(\Sc_I)_\nu}$. 

\begin{lem} For all $\nu\geq \nu_0$ we have
$$ \phi_*(\Oc_{\PP^2}) \simeq \widetilde{(\Sc_I)_\nu}$$
\end{lem}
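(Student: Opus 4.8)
The plan is to prove the two sheaves agree by exhibiting a natural morphism and checking it is an isomorphism stalk by stalk, reducing everything to the affine-local situation over $\PP^3$. First I would recall that $\Gamma$ is $\Proj$ (in the $s$-variables, over $R$) of $\Sc_I = \Sym_S(I)$, which under hypothesis \eqref{H} coincides projectively with the Rees algebra; concretely, for an affine open $\Spec(R_{(x_j)})\subset \PP^3$, the pushforward $\pi_{2*}(\Oc_\Gamma\otimes\pi_1^*\Oc_{\PP^2}(\nu))$ restricted there is the sheaf associated to the degree-$\nu$ part of $\Sc_I$ localized at $x_j$, i.e.\ $(\Sc_I)_\nu\otimes_R R_{(x_j)}$ up to the usual saturation/degree-shift bookkeeping — but this is exactly $\widetilde{(\Sc_I)_\nu}$ on that open by definition of the sheaf associated to a graded module. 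So the content is really the identification of the left-hand side of \eqref{eq:phiPP1}-style with $\pi_{2*}(\Oc_\Gamma\otimes\pi_1^*\Oc_{\PP^2}(\nu))$ and then the comparison with the sheafification, where the hypothesis $\nu\geq\nu_0$ must enter.

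Next I would make the hypothesis $\nu\geq\nu_0$ do its work. The issue is that $\widetilde{(\Sc_I)_\nu}$ is the sheaf on $\PP^3$ associated to a single graded $R$-module, whereas $\pi_{2*}(\Oc_\Gamma\otimes\pi_1^*\Oc_{\PP^2}(\nu))$ is computed from the whole bigraded picture; these differ in general by the local cohomology $H^0_\mm$ and $H^1_\mm$ of $\Sc_I$ in degree $\nu$ (the $\mm$-torsion, and the failure of $(\Sc_I)_\nu$ to be the module of global sections of its own sheafification). By Proposition \ref{prop:regSym} (together with Lemma \ref{lemFromDimToN} and Corollary \ref{cor:key}, or directly $a^*(\Sc_I)+1\leq\nu_0$), we have $H^0_\mm(\Sc_I)_\nu = H^1_\mm(\Sc_I)_\nu = 0$ for all $\nu\geq\nu_0$, and moreover the formation of $(\Sc_I)_\nu$ commutes with the base change to $\kappa(\pp)$ and with localization at any $\pp\in\Proj(R)$ in these degrees. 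This vanishing is what guarantees that, for each $\pp$, the stalk of $\widetilde{(\Sc_I)_\nu}$ at $\pp$ equals $(\Sc_I)_\nu\otimes_R R_{(\pp)}$ with no correction terms, and hence equals the stalk of the pushforward.

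Then I would invoke the finiteness hypothesis to produce the comparison with $\phi_*(\Oc_{\PP^2})$. Since $\phi$ is finite, ${\pi_1}_{|\Gamma}$ is an isomorphism (the base locus is empty, so $\Gamma\to\PP^2$ has no exceptional locus and is proper birational between the smooth $\PP^2$ and $\Gamma$, hence an isomorphism), giving $\phi_*(\Oc_{\PP^2})\simeq\pi_{2*}(\Oc_\Gamma)$ as in \eqref{eq:phiPP1}; pushing forward the twist $\pi_1^*\Oc_{\PP^2}(\nu)$ and using that $\pi_1$ is an isomorphism, $\pi_{2*}(\Oc_\Gamma\otimes\pi_1^*\Oc_{\PP^2}(\nu))\simeq\phi_*(\Oc_{\PP^2}(\nu))$. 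But $\phi$ finite means $\phi_*(\Oc_{\PP^2}(\nu))\simeq\phi_*(\Oc_{\PP^2})$ for every $\nu$ (a finite morphism is affine, and an affine morphism pushes forward the structure sheaf the same way up to the built-in twist, which is absorbed since $\phi^*\Oc_{\PP^3}(1)=\Oc_{\PP^2}(d)$ matches the grading convention on $\Sc_I$). Combining, $\phi_*(\Oc_{\PP^2})\simeq\widetilde{(\Sc_I)_\nu}$ for $\nu\geq\nu_0$.

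\textbf{Main obstacle.} The delicate point is bookkeeping rather than a deep idea: one must be careful that "the sheaf associated to $(\Sc_I)_\nu$" really is $\pi_{2*}$ of the appropriate twisted structure sheaf of $\Gamma$, which requires precisely the vanishing $H^0_\mm(\Sc_I)_\nu=H^1_\mm(\Sc_I)_\nu=0$ from Proposition \ref{prop:regSym} so that no $\mm$-torsion correction appears and so that global sections over affine opens of $\PP^3$ are computed directly from $(\Sc_I)_\nu$; getting the grading/twist conventions ($\phi^*\Oc_{\PP^3}(1)=\Oc_{\PP^2}(d)$) consistent between the two descriptions is where care is needed.
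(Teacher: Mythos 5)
Your proposal follows essentially the same route as the paper: use the vanishing $H^0_\mm(\Sc_I)_\nu = H^1_\mm(\Sc_I)_\nu = 0$ for $\nu\geq\nu_0$ (from Proposition \ref{prop:regSym}) to identify $\widetilde{(\Sc_I)_\nu}$ with ${\pi_2}_*\bigl(\Oc_\Gamma\otimes\pi_1^*\Oc_{\PP^2}(\nu)\bigr)$, and then exploit that $\pi_1|_\Gamma$ is an isomorphism when $\phi$ is finite so as to compare with $\phi_*$. The one step I would flag is your justification for $\phi_*(\Oc_{\PP^2}(\nu))\simeq\phi_*(\Oc_{\PP^2})$: ``a finite morphism pushes forward the structure sheaf the same way up to the built-in twist'' is not a valid general principle, and even when $\nu$ is a multiple of $d$ the projection formula gives $\phi_*(\Oc_{\PP^2}(\nu))\simeq\phi_*(\Oc_{\PP^2})\otimes\Oc_{\PP^3}(\nu/d)$, which is a genuine twist. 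What is actually true (and is all the subsequent Fitting-ideal comparison needs) is that the two pushforwards are Zariski-locally isomorphic on $\PP^3$; the paper's own parenthetical ``because $\pi_1^*(\Oc_{\PP^2}(\nu))$ is an invertible sheaf'' is exactly this same elision, so your proof matches the paper in both substance and in the level of care taken at that final step.
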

\begin{proof}
	The exact sequence of $R$-modules
	$$ 0 \rightarrow H^0_\mm(\Sc_I)_\nu \rightarrow (\Sc_I)_\nu \rightarrow H^0(\PP^2_{\Spec(R)},\Oc_\Gamma\otimes \pi_1^*(\Oc_{\PP^2}(\nu))) 
	\rightarrow H^1_\mm(\Sc_I)_\nu \rightarrow 0$$
	shows that for all $\nu\geq \nu_0$ we have (extending $\pi_2$ to $\PP^2\times \Spec(R) \rightarrow \Spec(R)$)
	\begin{equation}\label{eq:pi2pi1}
	(\Sc_I)_\nu= H^0(\PP^2_{\Spec(R)},\Oc_\Gamma\otimes \pi_1^*(\Oc_{\PP^2}(\nu)))=H^0(\Spec(R),{\pi_2}_{*}(\Oc_\Gamma\otimes \pi_1^*(\Oc_{\PP^2}(\nu))))
	\end{equation}
	where the last equality follows from the definition of ${\pi_2}_*(-)$ and the fact that $\pi_2^{-1}(\Spec(R))=\PP^2\times \Spec(R)$. Since \eqref{eq:pi2pi1} is an equality of graded $R$-modules, we deduce that 
	$$\widetilde{(\Sc_I)_\nu}\simeq {\pi_2}_*(\Oc_\Gamma\otimes_{\Oc_{\PP^2\times \PP^3}} \pi_1^*(\Oc_{\PP^2}(\nu))).$$
	Comparing this with \eqref{eq:phiPP1} we obtain that for all $\nu\geq \nu_0$
	$$ \phi_*(\Oc_{\PP^2}) \simeq \widetilde{(\Sc_I)_\nu}$$
	(because $\pi_1^*(\Oc_{\PP^2}(\nu))$ is an invertible sheaf).	
\end{proof}

In particular, we get that
$$\Fitt^0_{\PP^3}( \pi_1^*(\Oc_{\PP^2}(\nu))) \simeq \Fitt^0_{\PP^3}( \widetilde{(\Sc_I)_\nu} ) \simeq \widetilde{\Fitt^0_\nu(\phi)}$$
where the last isomorphism follows from the stability of Fitting ideals under base change and the classical properties of the sheaves associated to graded modules.

\section{The case of bi-graded parameterizations of surfaces}\label{sec:bigraded}

In this paper, we have treated rational surfaces that are parameterized by a birational map from $\PP^2$ to $\PP^3$. In the field of geometric modeling such surfaces are called \emph{rational triangular B\'ezier surfaces} (or patches). In this section, we will (briefly) show that we can treat similarly rational surfaces that are parameterized by a birational map from $\PP^1\times \PP^1$ to $\PP^3$; such surfaces are called \emph{rational tensor product surfaces} in the field of geometric modeling where they are widely used.

\medskip

Suppose given a rational map $\phi: \PP^1_k\times \PP^1_k \dto \PP^3_k$ where $f_0,f_1,f_2,f_3$ are four bi-homogeneous polynomials of bi-degree $\ud:=(d_1,d_2)$, that is to say that $f_i \in S:=k[s_0,s_1,t_0,t_1]$ is homogeneous of degree $d_1$ with respect to the variables $s_0,s_1$ and is homogeneous of degree $d_2$ with respect to the variables $t_0,t_1$.  As in Section \ref{sec:def}, denote by $\Scc$ the closure of the image of $\phi$, set $x=(x_0,x_1,x_2,x_3)$, $R:=k[x]$, $I=(f_0,f_1,f_2,f_3)\subset S_\ud$ and assume that conditions \eqref{H} hold. In this setting, the approximation complex of cycles can still be considered and it inherits a bi-graduation from the ring $S$ with respect to the two sets of variables $s_0,s_1$ and $t_0,t_1$. It is proved in \cite{Bot10} that matrix representations of $\phi$ still exist in this setting. To be more precise, define the following region (see Figure \ref{fig:R})
\begin{multline*}
\Rf_{\ud}:=\{ (n_1,n_2)\in \ZZ^2 \textrm{ such that } (n_1 \leq d_1-2), \ \textrm{ or } (n_2 \leq d_2-2), \textrm{ or } \\
  (n_1 \leq 2d_1-2 \textrm{ and } n_2 \leq 2d_2-2) \}\subset \ZZ^2.	
\end{multline*} 
\begin{figure}[!h]
	\centering
	\includegraphics[scale=1]{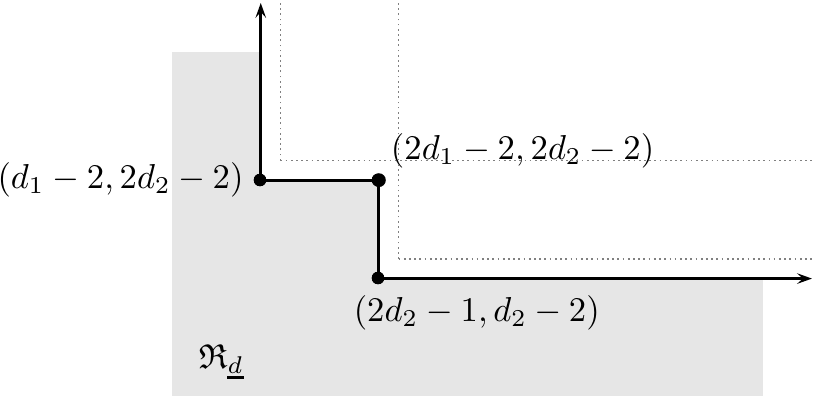}
	\caption{The region $\Rf_{\ud}$ is represented in grey color.}
	\label{fig:R}
\end{figure}

\noindent Then, for all $\unu=(\nu_1,\nu_2) \notin \Rf_\ud$ the matrix $M(\phi)_\unu$, which corresponds to the $\unu$-graded part of the matrix $M(\phi)$ in \eqref{eqZComplex}, is a matrix representation of $\phi$ with all the expected properties, in particular (see \cite{Bot10} for the details):
\begin{itemize}
	\item its entries are linear forms in $R=k[x_0,\ldots,x_3]$,
	\item it has $(\nu_1+1)(\nu_2+1)$ rows and at least as many columns as rows,
	\item it has maximal rank $(\nu_1+1)(\nu_2+1)$ at the generic point of $\PP^3_k$,
	\item when specializing $M(\phi)_\unu$ at a point $P\in \PP^3$, its rank drops if and only if $P\in \Sc$.
\end{itemize}
Therefore, one can also consider the Fitting ideals $\Fitt^i_\unu(\phi) \subset R$ associated to $M(\phi)_\unu$ for which Proposition \ref{propVanishingFitt} holds. Notice that in this setting, the projective plane $\PP^2$ is replaced by $\PP^1\times \PP^1$ in \eqref{eq:diagram}, so that for any point $\pp \in \Proj (R)$ the fiber of $\pi_2$ at $\pp$ is a subscheme of $\PP^1_{\kappa(\pp)}\times \PP^1_{\kappa(\pp)}$:
$$\pi_2^{-1}(\pp) = \mathrm{BiProj}(\Sc_I\otimes_R \kappa(\pp)) \subset  \PP^1_{\kappa(\pp)}\times \PP^1_{\kappa(\pp)}.$$
Thus, the Hilbert function of a fiber $\pi_2^{-1}(\pp)$ is a function of $\ZZ^2$. Similarly to Section \ref{sec:main}, we set $N^\pp_\unu:=HF_{\Sc_I\otimes_R \kappa(\pp)}(\unu)$. It turns out that an equality similar to \eqref{eq:HPHF} also holds:
$$HP_{\Sc_I\otimes_R \kappa(\pp)} (\unu ):=HF_{\Sc_I\otimes_R \kappa(\pp)}(\unu )-\sum_i (-1)^i HF_{H_{B}^i({\Sc_I\otimes_R \kappa(\pp)})}(\unu)$$
where the ideal $B$ denotes the product of ideals $B:=(s_0,s_1)(t_0,t_1)\subset S$. The function $HP_{\Sc_I\otimes_R \kappa(\pp)}$ is a polynomial function called the Hilbert polynomial (see \cite{ColTesis}  or \cite[Chapter 4.6]{BC13} for more details). The following result allows to control the vanishing of the Hilbert function of the local cohomology modules of the fibers.

\begin{thm} For all $\pp\in \Spec(R)$, all integer i and $\unu \notin \Rf_\ud$,
	$$H_{B}^i({\Sc_I\otimes_R \kappa(\pp)})_\unu =0$$
\end{thm}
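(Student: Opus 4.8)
The plan is to mimic the structure of the proof of Proposition \ref{prop:regSym}, which established the analogous vanishing for $\Sc_I$ itself over $\PP^2$, but now working with the bi-graded approximation complex of cycles and the local cohomology functor $H^i_B(-)$ associated to the product ideal $B=(s_0,s_1)(t_0,t_1)$. First I would fix $\pp\in\Spec(R)$ and reduce to studying the fiber $\Sc_I\otimes_R\kappa(\pp)$ as a bi-graded $S\otimes_k\kappa(\pp)$-module, which is cut out by the ideal $I\otimes_R\kappa(\pp)$, the image of $I$ under specialization. The key tool is again the spectral sequence associated to the (bi-graded) complex $\Zc_\bullet\otimes_R\kappa(\pp)$, namely a sequence of the shape
$$H^j_B\big((Z_{i+j})\otimes\kappa(\pp)\big)_{\unu+(i+j)\ud}\Rightarrow H^i_B(\Sc_I\otimes_R\kappa(\pp))_\unu,$$
reducing the desired vanishing to the vanishing of the local cohomology of the modules $Z_j\otimes\kappa(\pp)$ (and of $S\otimes\kappa(\pp)$) in the appropriate bi-degrees, exactly as in Proposition \ref{prop:regSym}.

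\textbf{Key steps.} I would proceed index by index. For $i=0$ and $i=1$ the argument should parallel the $\PP^2$ case: use the four-term complex $0\to Z_1\to S(-\ud)^4\to S\to S/I\to 0$ to identify $H^2_B(Z_1)$ with $H^0_B(S/I)$ up to shift, and use a bi-graded Koszul-duality statement (the analogue of \cite[Lemma 4.1]{BC05}, valid over $\PP^1\times\PP^1$ since $S$ is a complete intersection of bi-forms) to control $H^3_B(Z_2)$ as a dual of $Z_1$; the bi-degree bookkeeping then forces the relevant graded pieces to vanish precisely when $\unu\notin\Rf_\ud$, because the region $\Rf_\ud$ is designed exactly so that its complement avoids all the ``bad'' bi-degrees $\unu-2\ud$, $\unu-\ud$, etc., coming from $Z_1\subset S(-\ud)^4$. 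For $i=2$ I would again split off the $B$-torsion kernel $K$ of $\Sc_I\to\Rc_I$ (supported on $V(I)$, which is finite, so $H^i_B(K)=0$ for $i\geq 2$), reducing to $H^2_B(\Rc_I\otimes\kappa(\pp))$, and then bound this via the surjections $(S/I)(-t\ud)^{\binom{t+3}{3}}\twoheadrightarrow I^t/I^{t+1}$ together with a bi-graded analogue of Lemma \ref{regdim0} controlling $H^1_B(S/I)$; here I would need to invoke (or re-derive) the bi-graded regularity bounds for ideals of dimension $\leq 1$ in $k[s_0,s_1,t_0,t_1]$, referencing \cite{ColTesis} or \cite{BC13}. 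For $i\geq 3$ one uses that $H^i_B(\Sc_I)$ is built from $H^i_B(S)$ (via the $\Zc_\bullet$ resolution and $\cd_B(S)=3$, since $B$ defines $\PP^1\times\PP^1$ of dimension $2$... actually $H^i_B(S)=0$ for $i>3$ and $H^3_B(S)$ is concentrated in bi-degrees with both coordinates $\leq -2$), so the corresponding pieces vanish trivially on $\Rf_\ud^c$.

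\textbf{Main obstacle.} The crux, and the step I expect to require the most care, is establishing the bi-graded analogue of Lemma \ref{regdim0} — i.e.\ an effective Castelnuovo--Mumford-type regularity bound, with respect to the $\ZZ^2$-grading and the ideal $B$, for an ideal $J\subset k[s_0,s_1,t_0,t_1]$ generated in bi-degree $\ud$ with $\dim(S/J)\leq 1$ and locally at most $3$ generators. Over $\PP^2$ this rests on liaison with a complete intersection of two general forms of the same degree; over $\PP^1\times\PP^1$ the naive analogue (a complete intersection of two general bi-forms of bi-degree $\ud$) has the right codimension but its regularity region is a translate of $\Rf_\ud$ rather than a single integer, so the liaison computation and the Koszul-duality shift both have to be carried out in $\ZZ^2$, tracking two parameters simultaneously. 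One must check that the three defining inequalities of $\Rf_\ud$ — $n_1\leq d_1-2$, $n_2\leq d_2-2$, and ($n_1\leq 2d_1-2$ and $n_2\leq 2d_2-2$) — are exactly what is needed to kill each local-cohomology contribution coming through the spectral sequence; this matching of the combinatorial region with the homological bounds is the real content, and once it is in place the index-by-index verification above goes through routinely. An alternative to spelling out a bi-graded Lemma \ref{regdim0} would be to cite directly the bi-graded regularity results of \cite{Bot10} used to prove that $M(\phi)_\unu$ is a matrix representation for $\unu\notin\Rf_\ud$, and to observe that the same bounds, being stable under the specialization $R\to\kappa(\pp)$ (since the $a^i_B$-invariants of a fiber are bounded by those of the total space, by the bi-graded analogue of Lemma \ref{lemFromDimToN}), yield the claimed vanishing for every fiber.
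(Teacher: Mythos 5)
The paper's actual proof does \emph{not} run the approximation-complex spectral sequence on the fiber; it splits into cases according to $\delta:=\dim(\Sc_I\otimes_R\kappa(\pp))$ and for the hard case uses the explicit four-term exact sequence of Theorem~\ref{thm:1dimfibers}. Your proposal instead opens with the spectral sequence
$$H^j_B\bigl((Z_{i+j})\otimes\kappa(\pp)\bigr)_{\unu+(i+j)\ud}\Rightarrow H^i_B(\Sc_I\otimes_R\kappa(\pp))_\unu,$$
and this is where it breaks down: $\Zc_\bullet$ is an $R$-free (hence $R$-flat) resolution of $\Sc_I$, so $H_i(\Zc_\bullet\otimes_R\kappa(\pp))=\Tor^R_i(\Sc_I,\kappa(\pp))$, which is generally nonzero for $i>0$. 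The tensored complex is therefore \emph{not} acyclic, and the spectral sequence you invoke has additional contributions from those $\Tor$'s that you never account for. The paper deliberately avoids this difficulty: it works with $\Sc_I\otimes_R\kappa(\pp)$ via its presentation from Lemma~\ref{1fiber} (the fiber ideal $I=(f_0)+h(g_1,g_2,g_3)$ and the associated four-term sequence $0\to Z_1'(\ud)\to Z_1(\ud)\to S\to \Sc_I\otimes\kappa(\pp)\to 0$), chasing the two spectral sequences of the \v Cech double complex of that sequence, and controlling $Z_1$ and $Z_1'$ by their own Koszul double complexes — \emph{not} via the $\Zc_\bullet$ modules. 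Relatedly, your remark that the fiber module is ``cut out by $I\otimes_R\kappa(\pp)$'' is confused: $I$ is an ideal of $S$, not of $R$, and the fiber is defined by the specialized linear syzygies, not by a specialization of $I$; Lemma~\ref{1fiber} is precisely the device the paper uses to turn that into a usable ideal-theoretic description.

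Your closing ``alternative'' — bi-graded Lemma~\ref{lemFromDimToN} together with the regularity bounds for $\Sc_I$ from \cite{Bot10} — is in fact what the paper does, but \emph{only for the easy case} $\delta=1$. You write that ``the $a^i_B$-invariants of a fiber are bounded by those of the total space,'' but Lemma~\ref{lemFromDimToN} only bounds $a^{v_\pp}$ and $a^{v_\pp-1}$, i.e.\ the top two local-cohomology indices. For a fiber of dimension one (the paper's case $\delta=2$), $a^0_B$ of the fiber is not controlled by this lemma, and it is exactly this $H^0_B$ term that requires the separate Theorem~\ref{thm:1dimfibers}-style argument. So the model for this proof is Theorem~\ref{thm:1dimfibers} (plus Corollary~\ref{cor:key}), not Proposition~\ref{prop:regSym}; generalizing the latter, as your main strategy proposes, is both harder (because of the $\Tor$ obstruction above) and not what is needed.
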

\begin{proof} We will only outline the proof of this result because it follows the main lines of the proof of Corollary \ref{cor:key}.
	
	Assume first that  $\delta:=\dim(\Sc_I\otimes_R \kappa(\pp))=1$. Lemma \ref{lemFromDimToN} implies that 
	$$H_{B}^0({\Sc_I\otimes_R \kappa(\pp)})_\unu=H_{B}^1({\Sc_I\otimes_R \kappa(\pp)})_\unu=0$$ if $H_{B}^0({\Sc_I})_\unu=H_{B}^1({\Sc_I})_\unu=0$. As proved in \cite[Theorem 4.7]{Bot10}, this latter condition holds if $\unu \notin \Rf_\ud$.
	
	Now, we turn to the case $\delta=2$ for which we have to control the vanishing of $H_{B}^0({\Sc_I\otimes_R \kappa(\pp)})_\unu$, $H_{B}^1({\Sc_I\otimes_R \kappa(\pp)})_\unu$ and $H_{B}^2({\Sc_I\otimes_R \kappa(\pp)})_\unu$. We proceed as in the proof of Theorem \ref{thm:1dimfibers}.
We may assume for simplicity that $\pp=(1:0:0:0)$ so that $I=(f_0)+h(g_1,g_2,g_3)$ 
with $h$ the equation of the unmixed part of dimension 2 of the fiber at $\pp$; it is a bi-homogeneous polynomial of bi-degree $\ue=(e_1,e_2)$. Setting $I':=(g_1,g_2,g_3)$, we have an exact sequence
		$$ 
		 0 \rightarrow Z_1'(\ud) \rightarrow Z_1(\ud) \rightarrow S \rightarrow \Sc_I\otimes \kappa(\pp) \rightarrow 0,$$
		 with $Z_1'$ the syzygy module of $(f_1,f_2,f_3)$, the first map sending $(b_1,b_2,b_3)$ to
		 $(0,b_1,b_2,b_3)$ and the second  $(a_0,a_1,a_2,a_3)$ to $a_0$. The two spectral sequences associated to the double complex
		$$ 0 \rightarrow \Cc_B^\bullet(Z_1'(\ud)) \rightarrow \Cc_B^\bullet (Z_1(\ud)) \rightarrow \Cc_B^\bullet(S)$$
	show that
	\begin{itemize}
		\item $H_{B}^0({\Sc_I\otimes_R \kappa(\pp)})_\unu=0$ if $H^1_B(Z_1(\ud))_\unu=H^2_B(Z_1'(\ud))_\unu=0$,
		\item $H_{B}^1({\Sc_I\otimes_R \kappa(\pp)})_\unu=0$ if $H^2_B(Z_1(\ud))_\unu=H^3_B(Z_1'(\ud))_\unu=0$,
		\item $H_{B}^2({\Sc_I\otimes_R \kappa(\pp)})_\unu=0$ if $H^2_B(S)_\unu=H^3_B(Z_1(\ud))_\unu=0$.
	\end{itemize}
Recall that $H^i_B(S)=0$ if $i\neq 2,3$, that $H^3_B(S)_\unu\neq 0$ if and only if $\nu_1\leq -2$ and $\nu_2\leq -2$, and that $H^2_B(S)_\unu\neq 0$ if and only if $\nu_1\leq-2$ and $\nu_2\geq 0$, or $\nu_1\geq 0$ and $\nu_2\leq-2$ (see \cite[Lemma 6.7 and \S 7]{Bot10}). 
	
	To control the local cohomology of $Z_1$ we consider the long exact sequence of local cohomology associated to the canonical  short exact sequence 
$$ 0 \rightarrow Z_1 \rightarrow S(-\ud)^4 \rightarrow I \rightarrow 0.$$ We get $H^1_B(Z_1)=0$ and the exact sequences
$H^1_B(I)\rightarrow H^2_B(Z_1) \rightarrow H^2_B(S(-\ud)^4)$ and $H^2_B(I)\rightarrow H^3_B(Z_1) \rightarrow H^3_B(S(-\ud)^4)$. Since we know the local cohomology of $S$, it remains to estimate the vanishing of the local cohomology modules $H^1(I)$ and $H^2(I)$. For that purpose, we examine the two spectral sequences associated to the double complex
$$ \Cc_B^\bullet(K_\bullet((f_0,f_1,f_2,f_3);S)).$$
We obtain that $H^1_B(I)_\unu=0$ for all $\unu$ such that $H^2_B(S(-2\ud))_\unu=H^3_B(S(-3\ud))_\unu=0$ and that $H^2_B(I)_\unu=0$ for all $\unu$ such that $H^2_B(S(-\ud))_\unu=H^3_B(S(-2\ud))_\unu=0$. From here, we deduce that $H^2_B(Z_1(\ud))_\unu=H^3_B(Z_1(\ud))_\unu=0$ for all $\unu \notin \Rf_\ud$.

Similarly, to control the local cohomology of $Z_1'$ we consider the long exact sequence of local cohomology associated to the canonical  short exact sequence 
$$0 \rightarrow Z_1'(\ue) \rightarrow S(-\ud+\ue)^3 \rightarrow I' \rightarrow 0.$$
We get $H^1_B(Z_1')=0$ and the exact sequences
$H^1_B(I')\rightarrow H^2_B(Z_1'(\ue)) \rightarrow H^2_B(S(-\ud+\ue)^3)$ and $H^2_B(I')\rightarrow H^3_B(Z_1'(\ue)) \rightarrow H^3_B(S(-\ud+\ue)^3)$.
As in the previous case, it remains to estimate the vanishing of the local cohomology modules $H^1(I')$ and $H^2(I')$. We proceed similarly by inspecting the two spectral sequences associated to the double complex $\Cc_B^\bullet(K_\bullet((g_1,g_2,g_3);S))$ and finally deduce that $H^2_B(Z_1'(\ud))_\unu=H^3_B(Z_1'(\ud))_\unu=0$ for all $\unu \notin \Rf_\ud$.
\end{proof}

As a consequence of this result, all the properties we obtained in Section \ref{sec:comp} also applied to the setting of a bi-graded parameterization. In particular, we have the following properties:
\begin{itemize}
	\item If $\dim \pi_2^{-1}(\pp)\leq 0$ then for all $\unu\notin \Rf_\ud$ 
	$$\corank(M_\unu(\phi)(\pp))=\deg(\pi_2^{-1}(\pp)).$$
	\item If $\dim \pi_2^{-1}(\pp) = 1$ then for all $\unu\notin \Rf_\ud$ 
	$$\corank(M_\unu(\phi)(\pp))=e_1\nu_1+e_2\nu_2+c$$
	where the curve component of $\pi_2^{-1}(\pp)$ is of bi-degree $(e_1,e_2)$ and $c\in \ZZ$.
\end{itemize}
From here, an algorithm similar to the one presented in \S \ref{sec:algo} can be derived, for all $\unu\notin \Rf_\ud$, in order to determine the characters of a fiber $\pi_2^{-1}(\pp)$ by comparing matrix representations with consecutive indexes. Typically, the first matrix representation to consider is $M(\phi)_{(d_1-1,2d_2-1)}$ (or $M(\phi)_{(2d_1-1,d_2-1)}$) which has $2d_1d_2$ rows. Then, informations on the fiber at a given point $\pp$ can be obtained by comparing the rank at $\pp$ of $M(\phi)_{(d_1-1,2d_2-1)}$ with the ranks at $\pp$ of $M(\phi)_{(d_1,2d_2-1)}$ and $M(\phi)_{(d_1-1,2d_2)}$.

\subsection*{Acknowledgments}
We would like to thank Ragni Piene and Bernard Teissier for enlightening discussions about the singularities of finite maps. Most of this work was done while the first author was hosted at INRIA Sophia Antipolis with a financial support of the european Marie-Curie Initial Training Network SAGA (ShApes, Geometry, Algebra), FP7-PEOPLE contract PITN-GA-2008-214584.

\def\cprime{$'$}


\begin{thebibliography}{LBBM09}

\bibitem[BC05]{BC05}
Laurent Bus{\'e} and Marc Chardin.
\newblock Implicitizing rational hypersurfaces using approximation complexes.
\newblock {\em J. Symbolic Comput}, 40(4-5):1150--1168, 2005.

\bibitem[BC13]{BC13}
Nicol{\'a}s Botbol and Marc Chardin.
\newblock Castelnuovo mumford regularity with respect to multigraded ideals.
\newblock Preprint arXiv:1107.2494v2, 2013.

\bibitem[BDD09]{BDD08}
Nicol{\'a}s Botbol, Alicia Dickenstein, and Marc Dohm.
\newblock Matrix representations for toric parametrizations.
\newblock {\em Comput. Aided Geom. Design}, 26(7):757--771, 2009.

\bibitem[BJ03]{BuJo03}
Laurent Bus{\'e} and Jean-Pierre Jouanolou.
\newblock On the closed image of a rational map and the implicitization
  problem.
\newblock {\em J. Algebra}, 265(1):312--357, 2003.

\bibitem[BLB12]{BL12}
Laurent Bus{\'e} and Thang Luu~Ba.
\newblock The surface/surface intersection problem by means of matrix based
  representations.
\newblock {\em Comput. Aided Geom. Design}, 29(8):579--598, 2012.

\bibitem[Bot11]{Bot10}
Nicol{\'a}s Botbol.
\newblock Implicit equation of multigraded hypersurfaces.
\newblock {\em J. Algebra}, 348(1):381--401, 2011.

\bibitem[Cha04]{Ch04}
Marc Chardin.
\newblock Regularity of ideals and their powers.
\newblock {\em Insitut de Math{\'e}matiques de Jussieu}, Pr{\'e}publication
  364, 2004.

\bibitem[Cha12]{Cha12}
Marc Chardin.
\newblock Powers of ideals and the cohomology of stalks and fibers of
  morphisms.
\newblock {\em Algebra and Number Theory, to appear (preprint
  arXiv:1009.1271)}, 2012.

\bibitem[CN08]{ColTesis}
Gemma Colom\'e-Nin.
\newblock Multigraded structures and the depth of blow-up algebras.
\newblock {\em PhD Thesis. Universitat de Barcelona}, 2008.

\bibitem[CU02]{CU02}
Marc Chardin and Bernd Ulrich.
\newblock {Liaison and Castelnuovo-Mumford regularity.}
\newblock {\em Am. J. Math.}, 124(6):1103--1124, 2002.

\bibitem[EH00]{EH00}
David Eisenbud and Joe Harris.
\newblock {\em {The geometry of schemes.}}
\newblock {Graduate Texts in Mathematics. 197. New York, NY: Springer. x, 294
  p.}, 2000.

\bibitem[GS]{M2}
Daniel~R Grayson and Michael~E Stillman.
\newblock Macaulay 2, a software system for research in algebraic geometry.
\newblock {\em http://www.math.uiuc.edu/Macaulay2/}.

\bibitem[KLU92]{KLU92}
Steven Kleiman, Joseph Lipman, and Bernd Ulrich.
\newblock {The source double-point cycle of a finite map of codimension one.}
\newblock {Complex projective geometry, Sel. Pap. Conf. Proj. Var.,
  Trieste/Italy 1989, and Vector Bundles and Special Proj. Embeddings,
  Bergen/Norway 1989, Lond. Math. Soc. Lect. Note Ser. 179, 199-212 (1992).},
  1992.

\bibitem[KLU96]{KLU96}
Steven Kleiman, Joseph Lipman, and Bernd Ulrich.
\newblock {The multiple-point schemes of a finite curvilinear map of
  codimension one.}
\newblock {\em Ark. Mat.}, 34(2):285--326, 1996.

\bibitem[LBBM09]{LBBM09}
Thang Luu~Ba, Laurent Bus{\'e}, and Bernard Mourrain.
\newblock {Curve/surface intersection problem by means of matrix
  representations}.
\newblock In H.~Kai and H.~Sekigawa, editors, {\em {SNC}}, pages 71--78, Kyoto,
  Japan, 2009. ACM Press.

\bibitem[MP89]{MP89}
David Mond and Ruud Pellikaan.
\newblock Fitting ideals and multiple points of analytic mappings.
\newblock In {\em Algebraic geometry and complex analysis ({P}\'atzcuaro,
  1987)}, volume 1414 of {\em Lecture Notes in Math.}, pages 107--161.
  Springer, Berlin, 1989.

\bibitem[Pie78]{Pie78}
Ragni Piene.
\newblock Polar classes of singular varieties.
\newblock {\em Ann. Sci. \'Ecole Norm. Sup. (4)}, 11(2):247--276, 1978.

\bibitem[SC95]{SC95}
Tom Sederberg and Falai Chen.
\newblock Implicitization using moving curves and surfaces.
\newblock 303:301--308, 1995.

\bibitem[SSV12]{SSV12}
Hal Schenck, Alexandra Seceleanu, and Javid Validashti.
\newblock Syzygies and singularities of tensor product surfaces of bidegree
  (2,1).
\newblock Preprint arXiv:1211.1648, to appear in {\it Mathematics of
  Computations}, 2012.

\bibitem[Sza08]{SZ}
Agnes Szanto.
\newblock Solving over-determined systems by the subresultant method.
\newblock {\em J. Symbolic Comput.}, 43(1):46--74, 2008.
\newblock With an appendix by Marc Chardin.

\bibitem[Tei77]{Tei77}
Bernard Teissier.
\newblock The hunting of invariants in the geometry of discriminants.
\newblock In {\em Real and complex singularities ({P}roc. {N}inth {N}ordic
  {S}ummer {S}chool/{NAVF} {S}ympos. {M}ath., {O}slo, 1976)}, pages 565--678.
  Sijthoff and Noordhoff, Alphen aan den Rijn, 1977.

\end{thebibliography}
\end{document}